\theoremstyle{plain}
\newtheorem{theorem}{Theorem}[section]
\newtheorem{corollary}[theorem]{Corollary}
\newtheorem{proposition}[theorem]{Proposition}
\newtheorem{lemma}[theorem]{Lemma}
\theoremstyle{definition}
\newtheorem{remark}[theorem]{Remark}
\DeclareMathOperator{\Char}{char}
\DeclareMathOperator{\Id}{id}
\newcommand{\FI}{FI(X,K)}
\begin{document}

\title{\textbf{Involutions of the second kind on finitary incidence algebras}}
\author{\'{E}rica Z. Fornaroli\thanks{ezancanella@uem.br} \\
\\
\textit{{\normalsize Departamento de Matem\'atica, Universidade
Estadual de Maring\'a,}} \\ \textit{{\normalsize Avenida Colombo,
5790, Maring\'a, PR, 87020-900, Brazil.}}}
\date{}

\maketitle

\begin{abstract}
Let $K$ be a field and $X$ a connected partially ordered set. In the first part of this paper, we show that the finitary incidence algebra $FI(X,K)$ of $X$ over $K$ has an involution of the second kind if and only if $X$ has an involution and $K$ has an automorphism of order $2$. We also give a characterization of the involutions of the second kind on $FI(X,K)$. In the second part, we give necessary and sufficient conditions for two involutions of the second kind on $FI(X,K)$ to be equivalent in the case where $\Char K\neq 2$ and every multiplicative automorphism of $\FI$ is inner.

\noindent{\bf Keywords:} Finitary incidence algebra; involution; automorphism.

\noindent{\bf 2020 MSC:} 16W10
\end{abstract}

\maketitle

\section{Introduction}

Let $X$ be a partially ordered set and let $K$ be a field. Involutions of the first kind on the finitary incidence algebra $FI(X,K)$ were studied in \cite{BFS11,BFS12,BFS14,BL,DKL,Scharlau,Spiegel05,Spiegel08}. In \cite{BFS12} was proved that $FI(X,K)$ has a $K$-linear involution if and only if $X$ has an involution. The description of such involutions was given in \cite{BFS14}. When $\Char K\neq 2$ and $X$ has an all-comparable element, the classification of the $K$-linear involutions on $FI(X,K)$ was obtained firstly for the case where $X$ is finite \cite{BFS11}, and later, locally finite~\cite{BFS12}. After, in~\cite{BFS14}, the classification was generalized for the case where $X$ is connected (not necessarily locally finite) and every multiplicative automorphism of $FI(X,K)$ is inner. If $X$ is connected, then $FI(X,K)$ is a central algebra, therefore the classification of all involutions of the first kind was given in \cite{BFS14}.

If $X$ is a finite chain of cardinality $n\geq 1$, then $FI(X,K)$ is isomorphic to the algebra $UT_n(K)$ of the $n\times n$ upper triangular matrices over $K$. When $\Char K\neq 2$, the involutions of the first kind on $UT_n(K)$ were classified \cite{DKL} and those of the second kind, in \cite{US}.

By \cite[Lemma~2.1]{US}, if two involutions on an algebra are equivalent, they are of the same kind. Thus, the classification of involutions of the second kind on $FI(X,K)$ in the case where $\Char K\neq 2$, $X$ is connected and every multiplicative automorphism of $FI(X,K)$ is inner completes the classification of involutions on $FI(X,K)$ under these assumptions.

In Section~\ref{Sec-Preliminaries} we fix the notations, recall some results about involutions and finitary incidence algebras, and we also prove some general results. If $X$ is connected, we prove in Section~\ref{Sec-Involutions} that $FI(X,K)$ has an involution of the second kind if and only if $X$ has an involution and $K$ has an involution of the second kind (Theorem~\ref{has_involution_iff}). We also give a description of the involutions of the second kind on $FI(X,K)$ (Theorem~\ref{DecoInv}). Finally, in Section~\ref{Classification}, we consider a field $K$ of characteristic different from $2$ and a connected partially ordered set $X$ such that every multiplicative automorphism of $FI(X,K)$ is inner and we give necessary and sufficient conditions for two involutions on $FI(X,K)$ to be equivalent via inner automorphisms (Theorems~\ref{necessary_inner}, \ref{x_3vazio}, \ref{x_3naovazio} and \ref{x_3unitario}). Then we use such a classification to obtain the general classification (Theorem~\ref{teogeralclassiinvolu}, Corollary~\ref{cor_general}, Theorems~\ref{x_3lessthanorequalto1} and \ref{x_3morethan1}). The results obtained in this section generalize the classification of involutions of the second kind on $UT_n(K)$ given in \cite{US} (that is, in the case where $X$ is a finite chain).

\section{Preliminaries }\label{Sec-Preliminaries}

\subsection{Involutions on a arbitrary algebra}

Let $K$ be a field and $K^\times=K\backslash\{0\}$. Throughout the paper $K$-algebras are associative with unity. The center of a $K$-algebra $A$ is denoted by $Z(A)$ and the set of invertible elements of $A$ is denoted by $U(A)$. For each $a \in U(A)$, $\Psi_a$ denotes the inner automorphism defined by $a$, i.e., $\Psi_a:A\to A$ is such that $\Psi_a(x) = axa^{-1}$ for all $x \in A$.

An \emph{involution} on a $K$-algebra $A$ is an additive map $\rho: A\to A$ such that $\rho(ab)=\rho(b)\rho(a)$ and $\rho^2(a)=a$ for all $a,b\in A$. Note that $\rho(a)\in Z(A)$ for all $a\in Z(A)$. If $\rho$ is the identity map on $Z(A)$, then it is an involution of the \emph{first kind}, otherwise, it is of the \emph{second kind}. It is easy to see that $\rho(1)=1$ and $\rho(a^{-1})=\rho(a)^{-1}$ for all $a\in U(A)$. An element $a\in A$ is \emph{$\rho$-symmetric} if $\rho(a)=a$, and \emph{$\rho$-unitary} if $a\rho(a)=\rho(a)a=1$.

Two involutions $\rho_1$ and $\rho_2$ on a $K$-algebra $A$ are \emph{equivalent} if there is an automorphism $\Phi$ of $A$ such that $\Phi\circ \rho_1 = \rho_2\circ \Phi$. In this case, $\rho_1$ and $\rho_2$ are of the same kind, by \cite[Lemma~2.1]{US}. Moreover, if $A$ is central, then $\rho_1|_K=\rho_2|_K$ by the same lemma.

We present below some results that holds for involutions of the first or second kind on an arbitrary $K$-algebra $A$.

\begin{proposition}\label{idempotent}
Let $\rho$ be an involution on a $K$-algebra $A$. If $e\in A$ is an idempotent, then $\rho(e)$ is an idempotent. Moreover, if the idempotent $e$ is primitive, then so is $\rho(e)$.
\end{proposition}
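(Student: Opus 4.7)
The plan is to verify both statements directly from the defining axioms of an involution, using the anti-multiplicativity $\rho(ab)=\rho(b)\rho(a)$ and the involutive property $\rho^2=\mathrm{id}$.

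For the first assertion, I would compute
\[
\rho(e)^2 = \rho(e)\rho(e) = \rho(e\cdot e) = \rho(e),
\]
where the middle equality uses anti-multiplicativity applied to $e$ and $e$. This immediately gives that $\rho(e)$ is idempotent.

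For the second assertion, I would argue by contraposition. Recall that an idempotent is primitive if it cannot be written as a sum of two nonzero orthogonal idempotents. Suppose $\rho(e) = f_1 + f_2$ with $f_1, f_2$ nonzero orthogonal idempotents. Applying $\rho$ and using additivity together with $\rho^2 = \mathrm{id}$ gives $e = \rho(f_1) + \rho(f_2)$. By the first part, both $\rho(f_1)$ and $\rho(f_2)$ are idempotents. They are nonzero because $\rho$ is a bijection (its own inverse), and they are orthogonal because
\[
\rho(f_1)\rho(f_2) = \rho(f_2 f_1) = \rho(0) = 0,
\]
and symmetrically $\rho(f_2)\rho(f_1)=\rho(f_1 f_2)=0$. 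This exhibits $e$ as a sum of two nonzero orthogonal idempotents, contradicting the primitivity of $e$. Hence $\rho(e)$ must be primitive.

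There is no real obstacle here: the argument is formal and relies only on the axioms listed just above the statement. The one subtlety worth being explicit about is orthogonality reversal under $\rho$, which follows from the order-reversing nature of $\rho$ together with $\rho(0)=0$ (immediate from additivity). Everything else reduces to a short computation or to the fact that $\rho$ is bijective.
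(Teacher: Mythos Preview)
Your proof is correct. The only difference from the paper's argument is the characterization of primitivity: you use ``cannot be written as a sum of two nonzero orthogonal idempotents'' and argue by contraposition, while the paper uses the equivalent formulation ``the only idempotents $s$ with $s\rho(e)=\rho(e)s=s$ are $0$ and $\rho(e)$'' and checks it directly by applying $\rho$ to such an $s$. Both reduce to the same formal manipulation with anti-multiplicativity and $\rho^2=\mathrm{id}$, so the approaches are essentially the same.
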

\begin{proof}
It is clear that if $e\in A$ is an idempotent, then so is $\rho(e)$. If $e$ is a primitive idempotent, let $s\in A$ be an idempotent such that $s\rho(e)=\rho(e)s=s$. Then $e\rho(s)=\rho(s)e=\rho(s)$ and $\rho(s)$ is an idempotent. Since $e$ is primitive, it follows that $\rho(s)=0$ or $\rho(s)=e$, therefore $s=0$ or $s=\rho(e)$. Thus, $\rho(e)$ is a primitive idempotent.
\end{proof}

\begin{lemma}\label{rho_psi_outra_psi_rho}
Let $\rho$ be an involution on a $K$-algebra $A$ and $u\in U(A)$. Then $\rho\circ\Psi_u=\Psi_{\rho(u^{-1})}\circ\rho$.
\end{lemma}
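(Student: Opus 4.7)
The plan is to verify the identity pointwise on $A$ by expanding both sides and showing they coincide. Fix an arbitrary $x\in A$. For the left-hand side, I would apply the involution to $\Psi_u(x)=uxu^{-1}$ and use the anti-multiplicative property $\rho(ab)=\rho(b)\rho(a)$ twice:
\[
(\rho\circ\Psi_u)(x)=\rho(uxu^{-1})=\rho(xu^{-1})\rho(u)=\rho(u^{-1})\rho(x)\rho(u).
\]

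For the right-hand side, I would unfold the inner automorphism definition:
\[
(\Psi_{\rho(u^{-1})}\circ\rho)(x)=\rho(u^{-1})\,\rho(x)\,\rho(u^{-1})^{-1}.
\]
Then I would invoke the preliminary observation that $\rho(a^{-1})=\rho(a)^{-1}$ for $a\in U(A)$ (already noted in the paper), applied with $a=u^{-1}$, to get $\rho(u^{-1})^{-1}=\rho(u)$. Substituting gives exactly $\rho(u^{-1})\rho(x)\rho(u)$, matching the left-hand side.

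The only mildly delicate point is making sure $\rho(u^{-1})$ is itself invertible so that $\Psi_{\rho(u^{-1})}$ makes sense; this is immediate from $\rho(a^{-1})=\rho(a)^{-1}$, which gives $\rho(u^{-1})\in U(A)$ with inverse $\rho(u)$. There is no real obstacle here—the proof is a direct two-line computation—so I would present it as such and keep the argument short.
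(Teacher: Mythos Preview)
Your proof is correct and is precisely the direct pointwise verification one expects here; the paper itself does not write out an argument but simply cites \cite[Lemma~4]{BFS11}, whose proof is this same two-line computation.
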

\begin{proof}
It is the same as for \cite[Lemma~4]{BFS11}.
\end{proof}

\begin{lemma}\label{inner_iff}
Let $\rho_1$ and $\rho_2$ be involutions on a $K$-algebra $A$. There is an inner automorphism $\Psi$ of $A$ such that $\Psi\circ\rho_1=\rho_2\circ\Psi$ if and only if $\rho_1=\Psi_{v\rho_2(v)}\circ\rho_2$ for some $v\in U(A)$.
\end{lemma}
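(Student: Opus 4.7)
The plan is to reduce both implications to a short calculation using Lemma \ref{rho_psi_outra_psi_rho}, which rewrites $\rho\circ\Psi_a$ as $\Psi_{\rho(a^{-1})}\circ\rho$. The correspondence between the inner automorphism $\Psi=\Psi_u$ appearing on the left-hand side of the equivalence and the element $v$ appearing on the right-hand side will be $v=u^{-1}$; once this substitution is identified, each direction is a two-line verification.

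For the forward implication, write $\Psi=\Psi_u$ with $u\in U(A)$. The assumption $\Psi_u\circ\rho_1=\rho_2\circ\Psi_u$ becomes $\rho_1=\Psi_u^{-1}\circ\rho_2\circ\Psi_u=\Psi_{u^{-1}}\circ\rho_2\circ\Psi_u$. Applying Lemma \ref{rho_psi_outra_psi_rho} to move $\rho_2$ past $\Psi_u$, one obtains $\rho_2\circ\Psi_u=\Psi_{\rho_2(u^{-1})}\circ\rho_2$, and combining these with the identity $\Psi_a\circ\Psi_b=\Psi_{ab}$ gives $\rho_1=\Psi_{u^{-1}\rho_2(u^{-1})}\circ\rho_2$. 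Setting $v:=u^{-1}$ yields the required equation.

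Conversely, given $v\in U(A)$ with $\rho_1=\Psi_{v\rho_2(v)}\circ\rho_2$, I would set $u:=v^{-1}$ and $\Psi:=\Psi_u$. A direct computation, again invoking Lemma \ref{rho_psi_outra_psi_rho} (now instantiated at $v^{-1}$, so that $\rho_2\circ\Psi_{v^{-1}}=\Psi_{\rho_2(v)}\circ\rho_2$), shows that both $\Psi\circ\rho_1$ and $\rho_2\circ\Psi$ simplify to $\Psi_{\rho_2(v)}\circ\rho_2$, establishing the desired equality $\Psi\circ\rho_1=\rho_2\circ\Psi$.

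I do not expect any real obstacle; the only subtlety is tracking inverses inside inner automorphisms (using $\Psi_a^{-1}=\Psi_{a^{-1}}$ and $\Psi_a\circ\Psi_b=\Psi_{ab}$) and correctly instantiating Lemma \ref{rho_psi_outra_psi_rho} at the appropriate element at each step. Guessing the correspondence $v=u^{-1}$ from the start makes both calculations symmetric and essentially identical.
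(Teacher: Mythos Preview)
Your argument is correct and is essentially the same approach as the paper's: the paper simply points to the proof of \cite[Lemma~5]{BFS11} with Lemma~\ref{rho_psi_outra_psi_rho} substituted in, which amounts precisely to the inner-automorphism manipulations you carry out via the substitution $v=u^{-1}$.
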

\begin{proof}
It is the same as for \cite[Lemma~5]{BFS11} just by replacing \cite[Lemma~4]{BFS11} by Lemma~\ref{rho_psi_outra_psi_rho}.
\end{proof}

\begin{proposition} \label{propAutInner}
Let $\rho$ be an involution on a $K$-algebra $A$ and $u\in U(A)$.
\begin{enumerate}
\item[(i)] $\Psi_u \circ \rho$ is an involution on $A$ if and only if $\rho(u)= cu$ for some $c\in Z(A)$.
\item[(ii)] If $\rho(u)=ku$ for some $k\in K$, then $\Psi_u \circ \rho$ is an involution on $A$ and $k$ is $\rho$-unitary. On the other hand, if $A$ is a central algebra and $\Psi_u \circ \rho$ is an involution, then $\rho(u)=ku$ for some $\rho$-unitary $k\in K$.
\end{enumerate}
\end{proposition}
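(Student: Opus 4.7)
The plan is to verify the defining properties of an involution for $\Psi_u\circ\rho$ directly, and to read off the two claimed equivalences from the resulting identity. First I would observe that $\Psi_u\circ\rho$ is automatically additive, and that the anti-multiplicative property is free: for any $a,b\in A$,
\[
(\Psi_u\circ\rho)(ab)=u\rho(b)\rho(a)u^{-1}=u\rho(b)u^{-1}\cdot u\rho(a)u^{-1}=(\Psi_u\circ\rho)(b)(\Psi_u\circ\rho)(a).
\]
So the whole question reduces to when $(\Psi_u\circ\rho)^2=\Id$.

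To analyze this, I would compute
\[
(\Psi_u\circ\rho)^2(a)=u\,\rho\bigl(u\rho(a)u^{-1}\bigr)u^{-1}=u\rho(u^{-1})\cdot a\cdot \rho(u)u^{-1},
\]
using $\rho(u^{-1})=\rho(u)^{-1}$ from the preliminaries. Writing $w=u\rho(u)^{-1}$, this reads $w a w^{-1}$, so $(\Psi_u\circ\rho)^2=\Id$ if and only if $w\in Z(A)$, equivalently $\rho(u)u^{-1}\in Z(A)$, which is exactly the condition $\rho(u)=cu$ for some $c\in Z(A)$. This proves (i).

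For (ii), the first direction is immediate from (i) since $K\cdot 1_A\subseteq Z(A)$. To get the $\rho$-unitarity of $k$, I would apply $\rho$ to $\rho(u)=ku$; noting that $\rho(k)\in Z(A)$ (because $\rho$ stabilizes the center), I obtain $u=\rho^2(u)=\rho(u)\rho(k)=ku\rho(k)=k\rho(k)u$, and cancelling the invertible $u$ yields $k\rho(k)=1$. Applying $\rho$ once more gives $\rho(k)k=1$. For the converse, if $A$ is central then $Z(A)=K\cdot 1_A$, so the element $c$ produced by (i) is a scalar and the previous argument applies verbatim.

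The computation is mostly formal; the only delicate point I expect is keeping track of the fact that although $k$ lies in $K$, the element $\rho(k)$ is only guaranteed to lie in $Z(A)$, not in $K$. This is what legitimises the cancellation $k u \rho(k)=k\rho(k)u$ and forces the use of the centrality hypothesis in the converse half of (ii).
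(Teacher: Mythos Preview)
Your proof is correct and follows essentially the same route as the paper's: both verify additivity and anti-multiplicativity directly, reduce the involution condition to $(\Psi_u\circ\rho)^2(a)=u\rho(u)^{-1}a\rho(u)u^{-1}$, and read off the centrality criterion $\rho(u)u^{-1}\in Z(A)$; for (ii) both apply $\rho$ to $\rho(u)=ku$ and cancel $u$. The only cosmetic difference is that the paper writes $\rho(ku)=\rho(uk)=\rho(k)\rho(u)$ to obtain $\rho(k)k=1$ in one step, whereas you first get $k\rho(k)=1$ and then note the symmetry.
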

\begin{proof}
(i) Clearly, $\Psi_u \circ \rho$ is additive and satisfies $(\Psi_u \circ \rho)(ab)=(\Psi_u \circ \rho)(b)(\Psi_u \circ \rho)(a)$. Moreover, for any $x\in A$,
$$(\Psi_u \circ \rho)^2(x) =(\Psi_u \circ \rho)(u\rho(x)u^{-1})=\Psi_u(\rho(u)^{-1}x\rho(u))= u\rho(u)^{-1}x\rho(u)u^{-1}.$$
Thus, $(\Psi_u \circ \rho)^2(x)=x$ if and only if $\rho(u)u^{-1}x=x\rho(u)u^{-1}$. Therefore, $\Psi_u \circ \rho$ is an involution if and only if $\rho(u)u^{-1}\in Z(A)$.

(ii) If $\rho(u)=ku$ for some $k\in K$, then $\Psi_u \circ \rho$ is an involution, by (i), and
$$u=\rho^2(u)=\rho(ku)=\rho(uk)=\rho(k)\rho(u)=\rho(k)ku.$$
Thus $\rho(k)k=1$.

If $A$ is a central algebra and $\Psi_u \circ \rho$ is an involution, then there is $k\in K$ such that $\rho(u)=ku$, by (i), and $\rho(k)k=1$ as proved in the previous paragraph.
\end{proof}

\subsection{Involutions of the second kind on a field of characteristic different from $2$}

Now, let $K$ be a field of characteristic different from $2$ and consider $K$ as a $K$-algebra. Suppose there is an involution $\ast: K\to K$ of the second kind. We will denote $\ast(a)$ by $a^{\ast}$ for each $a\in K$. Let
$$K_0=\{a\in K : a^{\ast}=a\}.$$
Then $K_0$ is a proper subfield of $K$ such that $[K : K_0]=2$ and there is a linear basis $\{1,i\}$ of $K$ over $K_0$ such that $i^2\in K_0$ and $i^{\ast}=-i$, by \cite[Lemma~2.5]{US}, since $\ast$ is an automorphism of order $2$.

\begin{proposition}\label{k=a_ast_a-1}
Let $K$ be a field of characteristic different from $2$ with an involution $\ast$ of the second kind. For each $\ast$-unitary $k\in K^{\times}$ there is $a\in K^{\times}$ such that $k=a^{\ast}a^{-1}$.
\end{proposition}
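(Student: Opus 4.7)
The plan is to construct an explicit $a$ in each case. Since $k$ is $*$-unitary, the relation $kk^* = 1$ forces $k^* = k^{-1}$, and applying $*$ to $kk^{-1}=1$ also yields $(k^{-1})^* = (k^*)^{-1} = k$. The sought identity $k = a^*a^{-1}$ is equivalent to $a^* = ka$, which is the form I will aim to verify.

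The main ansatz is $a = 1 + k^{-1}$. A one-line computation using the preceding observations gives $a^* = 1^* + (k^{-1})^* = 1 + k$ on one side and $ka = k + 1$ on the other, so $a^* = ka$, hence $a^*a^{-1} = k$, whenever $a \neq 0$. The only value of $k$ for which $a = 0$ is $k = -1$. For that exceptional case I would fall back on the element $i$ furnished just before the statement of the proposition: $i \neq 0$ because $\{1,i\}$ is a $K_0$-basis of $K$, while $i^* = -i$ gives $i^*i^{-1} = -1 = k$, as required.

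I do not anticipate a serious obstruction: the statement is essentially Hilbert's Theorem~90 for the quadratic extension $K/K_0$, reduced here to two elementary computations. The only delicate point is the degeneracy of the main ansatz at $k = -1$, and this is precisely where the hypothesis $\operatorname{char} K \neq 2$ plays its role: it ensures that $-1 \neq 1$, so that the replacement $a = i$ with $i^* = -i$ genuinely is a nonzero element solving $a^* = ka$ rather than collapsing to the trivial case.
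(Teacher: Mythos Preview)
Your proof is correct. In fact, the element you choose coincides with the one the paper uses in its main case: writing $k=a+bi$ with $a,b\in K_0$, the paper takes $l=(1+a)-bi$, and since $k^\ast=a-bi=k^{-1}$ this is exactly your $1+k^{-1}$. The difference lies in presentation and case analysis. The paper works throughout in the coordinates $a+bi$, splits into the cases $b=0$ (forcing $k=\pm1$) and $b\neq0$, and verifies $l^\ast l^{-1}=k$ by a several-line computation using the relation $b^2i^2=a^2-1$. Your argument is coordinate-free: the single identity $a^\ast=(1+k^{-1})^\ast=1+k=k(1+k^{-1})=ka$ handles every $k\neq-1$ at once, leaving only $k=-1$ for the fallback $a=i$. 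This is a genuinely cleaner route---it isolates the one degenerate value of $k$ rather than the larger set $\{k:k\in K_0\}$, and it makes transparent that the result is just Hilbert's Theorem~90 for the quadratic extension $K/K_0$, with $\operatorname{char}K\neq2$ entering only to supply the element $i$ with $i^\ast=-i\neq i$.
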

\begin{proof}
Let $k\in K^{\times}$ be a $\ast$-unitary element. Let $\{1,i\}$ be a linear basis of $K$ over $K_0$ such that $i^2\in K_0$ and $i^{\ast}=-i$. Then there are $a,b\in K_0$ such that $k=a+bi$.

If $b=0$, then $k^{\ast}=a=k$, that is, $k^{-1}=k$. Thus $k=\pm 1$ and therefore
$$k=1=1^{\ast}1^{-1} \text{ or } k=-1=i^{\ast}i^{-1}.$$

Suppose $b\neq 0$. We have $1=kk^{\ast}=(a+bi)(a-bi)=a^2-b^2i^2$ which implies
\begin{equation}\label{bqaq}
  b^2i^2=a^2-1.
\end{equation}
Consider $l=(1+a)-bi\in K^{\times}$. Applying \eqref{bqaq} we obtain
\begin{align*}
  l^{\ast}l^{-1} & = [(1+a)+bi]\cdot\frac{(1+a)+bi}{(1+a)^2-b^2i^2}=\frac{[(1+a)+bi]^2}{(1+a)^2-b^2i^2} \\
                 & = \frac{(1+a)^2+2(1+a)bi+b^2i^2}{1+2a+a^2-b^2i^2}  \\
                 & = \frac{1+2a+a^2+2bi+2abi+a^2-1}{1+2a+a^2-a^2+1} \\
                 & = \frac{2a(1+a)+2bi(1+a)}{2(1+a)}=a+bi=k.
\end{align*}
\end{proof}

We will also consider the set
$$K_1=\{aa^{\ast} : a\in K^{\times}\}.$$
It is easy to see that $K_1$ is a subgroup of the multiplicative group $K_0^{\times}=K_0\backslash \{0\}$.

\subsection{Posets and finitary incidence algebras}

Let $(X, \leq)$ be a partially ordered set (poset, for short). Given $x,y \in X$, the \emph{interval} from $x$ to $y$ is the set $[x,y]= \{z \in X :$ $x \leq z \leq y\}$. If all intervals of $X$ are finite, then $X$ is said to be \emph{locally finite}. The poset $X$ is \emph{connected} if for any $x,y \in X$ there exist a positive integer $n$ and $x=x_0,x_1,\ldots, x_n=y$ in $X$ such that $x_i\leq x_{i+1}$ or $x_{i+1}\leq x_i$ for $i=0,1,\ldots,n-1$.

A map $\lambda:X \to X$ is an \emph{involution} on $X$ if $\lambda^2(x)=x$ for all $x\in X$, and for any $x,y \in X$, the following property is satisfied:
$$ x\leq y \Leftrightarrow \lambda(y) \leq \lambda(x).$$

If the poset $X$ has an involution $\lambda$, then, by \cite[Theorem~4.7]{BFS14}, there is a triple of disjoint subsets $(X_1, X_2, X_3)$ of $X$ with $X = X_1 \cup X_2 \cup X_3$ satisfying:
\begin{enumerate}
\item[(i)] $X_3=\{ x \in X : \lambda(x) = x\}$;
\item[(ii)] if $x \in X_1$ ($X_2$), then $\lambda(x) \in X_2$ ($X_1$);
\item[(iii)] if $x \in X_1$ ($X_2$) and $y \leq x$ ($x \leq y$), then $y \in X_1$ ($X_2$).
\end{enumerate}
In this case, $(X_1, X_2, X_3)$ is called a $\lambda$-\emph{decomposition of} $X$.

Let $X$ be a poset and $K$ a field. The \emph{incidence space} of $X$ over $K$ is the $K$-space of functions $I(X,K)=\{f:X\times X\to K : f(x,y)=0 \text{ if } x\nleq y\}$. Let $FI(X,K)$ be the subspace of $I(X,K)$ formed by the functions $f$ such that for any $x\leq y$ in $X$, there is only a finite number of subintervals $[u,v]\subseteq[x,y]$ such that $u\neq v$ and $f(u,v)\neq 0$. Then $FI(X,K)$ is a $K$-algebra with the (convolution) product
	$$(fg)(x,y)=\sum_{x\leq z\leq y}f(x,z)g(z,y),$$
for any $f, g\in \FI$, called the \emph{finitary incidence algebra} of $X$ over $K$. Furthermore, if $f \in FI(X,K)$ and $g\in I(X,K)$, then $fg, gf \in I(X,K)$, and with this action of $FI(X,K)$ on $I(X,K)$ on the left and on the right, $I(X,K)$ is an $FI(X,K)$-bimodule~\cite{KN}.

The unity of $FI(X,K)$ is the function $\delta$ defined by $\delta(x,y)=1$ if $x=y$, and $\delta(x,y)=0$ otherwise. An element $f \in FI(X,K)$ is invertible if and only if $f(x,x) \neq 0$ for all $x \in X$, by \cite[Theorem~2]{KN}. Given $x,y\in X$ with $x\leq y$, we denote by $e_{xy}$ the element of $FI(X,K)$ defined by $e_{xy}(x,y)=1$ and $e_{xy}(u,v)=0$ if $(u,v)\neq (x,y)$, and we write $e_x$ for $e_{xx}$. Then for any $f\in FI(X,K)$,
\begin{equation}\label{e_xfe_y}
e_xfe_y=\begin{cases}
f(x,y)e_{xy} & \text{if } x\leq y\\
0 & \text{otherwise}
\end{cases}.
\end{equation}

\begin{proposition}\label{propcenFI}
Let $X$ be a poset and $K$ a field. The finitary incidence algebra $FI(X,K)$ is central if and only if $X$ is connected.
\end{proposition}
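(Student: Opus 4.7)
The plan is to prove both directions by explicitly producing or constraining central elements, using the matrix-unit-like elements $e_x$ and $e_{xy}$ together with formula \eqref{e_xfe_y}.

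For the ``only if'' direction I would argue contrapositively. Assume $X$ is disconnected and let $C$ be a connected component with $\emptyset\neq C\neq X$. Define $f_C\in I(X,K)$ by $f_C(x,x)=1$ for $x\in C$ and $f_C(u,v)=0$ otherwise. Since $f_C$ is supported on the diagonal, the finitary condition on $FI(X,K)$ is vacuous and $f_C\in FI(X,K)$. The key observation is that any nonzero value $g(u,v)$ of an element $g\in FI(X,K)$ forces $u\leq v$, hence $u$ and $v$ lie in the same connected component. Using the convolution formula I would check that $(f_C g)(u,v)=f_C(u,u)g(u,v)$ and $(gf_C)(u,v)=g(u,v)f_C(v,v)$ agree (both equal $g(u,v)$ when $u,v\in C$ and vanish otherwise). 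Thus $f_C\in Z(FI(X,K))\setminus K\delta$.

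For the ``if'' direction I take $X$ connected and $f\in Z(FI(X,K))$, and show $f=k\delta$ in two steps. First, from $e_xf=fe_x$ I would deduce $e_xfe_y=0$ whenever $x\neq y$; by \eqref{e_xfe_y} this yields $f(x,y)=0$ for every $x<y$, so $f$ is ``diagonal''. Second, for each comparable pair $x\leq y$ I would directly compute the convolutions $fe_{xy}$ and $e_{xy}f$ using the diagonality just established. They collapse to $f(x,x)e_{xy}$ and $f(y,y)e_{xy}$ respectively, and centrality forces $f(x,x)=f(y,y)$. Connectedness then lets me chain these equalities along a zig-zag $x=x_0,x_1,\ldots,x_n=y$, so the diagonal value is a constant $k\in K$, giving $f=k\delta$.

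The main obstacle is a conceptual one about the algebra rather than a hard calculation: one cannot write the central element of the converse as a possibly infinite formal sum $\sum_{x\in C}e_x$, since such sums need not belong to $FI(X,K)$; instead $f_C$ must be defined directly as a function on $X\times X$ and then seen to satisfy the finitary condition trivially because it is supported on the diagonal. The rest of the argument reduces to careful but routine bookkeeping with the convolution product and the defining identity \eqref{e_xfe_y}.
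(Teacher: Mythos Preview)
Your proof is correct and is precisely the standard argument that the paper merely cites (Theorem~1.3.13 and Corollary~1.3.14 of \cite{SO97}) without reproducing: the paper's own ``proof'' is just a one-line reference to the locally finite case, while you have written out the details and noted the one point that needs care in the finitary setting, namely that the characteristic function $f_C$ of a component (this is the element the paper later denotes $e_C$) lies in $FI(X,K)$ because it is diagonal. There is nothing to add.
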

\begin{proof}
The proof is the same as for the case where $X$ is locally finite. (See Theorem~1.3.13 and Corollary~1.3.14 of \cite{SO97}).
\end{proof}

We recall that if $\alpha$ is an automorphism $X$, then $\alpha$ induces an automorphism $\widehat{\alpha}$ of $FI(X,K)$ by $\widehat{\alpha}(f)(x,y) = f(\alpha^{-1}(x),\alpha^{-1}(y))$, for all $f\in FI(X,K)$ and $x,y\in X$. An element $\sigma \in I(X,K)$ such that $\sigma(x,y)\neq 0$ for all $x \leq y$, and $\sigma(x,y)\sigma(y,z)=\sigma(x,z)$ whenever $x\leq y\leq z$, determines an automorphism $M_\sigma$ of $FI(X,K)$ by $M_{\sigma}(f)(x,y)=\sigma(x,y)f(x,y)$, for all $f \in FI(X,K)$ and $x,y \in X$ (see \cite[Proposition~2.2]{BFS14}). Such an automorphism $M_{\sigma}$ is called \emph{multiplicative}. The next theorem follows from \cite[Lemma~3]{AutK}.

\begin{theorem} \label{DecoAutFIP}
 Let $X$ be a poset and let $K$ be a field. If $\Phi$ is an automorphism of $FI(X,K)$, then $\Phi = \Psi \circ M \circ \widehat{\alpha}$, where $\Psi$ is an inner automorphism, $M$ is a multiplicative automorphism, and $\widehat{\alpha}$ is the automorphism of $FI(X,K)$ induced by an automorphism $\alpha$ of $X$.
\end{theorem}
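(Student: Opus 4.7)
The plan is to peel off the three factors in reverse order by tracking how $\Phi$ acts on the primitive idempotents $e_x$ and the elementary elements $e_{xy}$. First I would analyze $\Phi$ on primitive idempotents. The argument of Proposition~\ref{idempotent} adapts verbatim to automorphisms, so each $\Phi(e_x)$ is a primitive idempotent. Using that every primitive idempotent of $FI(X,K)$ is conjugate to a unique $e_y$, I would produce an invertible $u$ and a bijection $\alpha : X \to X$ with $\Phi(e_x) = u e_{\alpha(x)} u^{-1}$ for every $x \in X$. The defining equivalence $x \leq y \Leftrightarrow e_x FI(X,K) e_y \neq 0$, preserved by $\Phi$, forces $\alpha$ and $\alpha^{-1}$ to be order-preserving, so $\alpha \in \Aut(X)$. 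Replacing $\Phi$ by $\widehat{\alpha}^{-1} \circ \Psi_u^{-1} \circ \Phi$, I may assume from now on that $\Phi(e_x) = e_x$ for every $x \in X$.

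Second, for $x \leq y$ in $X$, the element $\Phi(e_{xy})$ satisfies $\Phi(e_{xy}) = \Phi(e_x e_{xy} e_y) = e_x \Phi(e_{xy}) e_y$, so by \eqref{e_xfe_y} there exists $\sigma(x,y) \in K$ with $\Phi(e_{xy}) = \sigma(x,y) e_{xy}$. Invertibility of $\Phi$ forces $\sigma(x,y) \in K^{\times}$, and applying $\Phi$ to $e_{xy} e_{yz} = e_{xz}$ for $x \leq y \leq z$ yields the cocycle identity $\sigma(x,y)\sigma(y,z) = \sigma(x,z)$. Setting $\sigma(x,x) = 1$, this $\sigma$ determines the multiplicative automorphism $M = M_\sigma$ of $FI(X,K)$, and $M^{-1} \circ \Phi$ fixes every $e_{xy}$.

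The main obstacle is the remaining step: showing that an automorphism $\Psi$ of $FI(X,K)$ that fixes every $e_{xy}$ must be inner. When $X$ is not locally finite, $FI(X,K)$ is not the $K$-linear span of the $e_{xy}$, so one cannot simply conclude $\Psi = \Id$. Instead, one has to construct an invertible element realizing $\Psi$ by conjugation, working coordinate-by-coordinate against the $FI(X,K)$-bimodule structure on the incidence space $I(X,K)$ from~\cite{KN}. Concretely, the constraints $\Psi(e_x f e_y) = e_x \Psi(f) e_y$ together with $\Psi(e_{xy}) = e_{xy}$ determine how $\Psi$ must twist each matrix entry of $f$, and one shows that this twisting is simultaneously realized by a single invertible $v \in FI(X,K)$, so $\Psi = \Psi_v$. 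This is the technical heart of \cite[Lemma~3]{AutK}, and is where I would expect the proof to require the most care; once it is in hand, the decomposition $\Phi = \Psi_{uv'} \circ M \circ \widehat{\alpha}$ (for an appropriate $v'$) follows by reassembling the factors peeled off in the first two paragraphs.
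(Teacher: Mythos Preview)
The paper itself does not prove this theorem; it simply records it as a consequence of \cite[Lemma~3]{AutK}. Your sketch has the right three-step shape, but the difficulty is misplaced.

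The step you gloss over is Step~1: you assert the existence of a \emph{single} invertible $u$ with $\Phi(e_x)=u\,e_{\alpha(x)}\,u^{-1}$ for every $x\in X$. A priori each $\Phi(e_x)$ is conjugate to $e_{\alpha(x)}$ by some $g_x$ that depends on $x$, and producing one $u$ that works simultaneously---and lies in $FI(X,K)$ rather than merely in $I(X,K)$---is precisely the technical heart of the argument. Compare the analogous construction in the proof of Theorem~\ref{DecoInv}: there one writes down $f$ explicitly via \eqref{f}, and verifying that $f\in FI(X,K)$ is what requires Lemmas~\ref{analogo3.3} and~\ref{analogo3.4}.

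Conversely, your Step~3 is not an obstacle at all. Once a $K$-algebra automorphism $\Psi$ fixes every $e_{xy}$ (in particular every $e_x$), then for any $f\in FI(X,K)$ and $x\le y$,
\[
\Psi(f)(x,y)\,e_{xy}=e_x\,\Psi(f)\,e_y=\Psi(e_x f e_y)=\Psi\bigl(f(x,y)e_{xy}\bigr)=f(x,y)\,e_{xy},
\]
so $\Psi(f)=f$ and hence $\Psi=\Id$. The fact that the $e_{xy}$ do not span $FI(X,K)$ is irrelevant, because the relations $e_x f e_y=f(x,y)e_{xy}$ already pin down every value of $\Psi(f)$. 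This is essentially \cite[Proposition~2.3]{BFS14}, used in the paper's proof of Theorem~\ref{DecoInv} to conclude that an automorphism fixing all $e_x$ is already multiplicative; after peeling off $M_\sigma$ as in your Step~2, nothing remains. Thus the inner factor has to be extracted in Step~1, and that is where the finitary bookkeeping you allude to actually lives.
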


\section{Involutions on $FI(X,K)$} \label{Sec-Involutions}

From now on, $K$ is a field and $X$ is a poset. In this section, we show that in the case where $X$ is connected, $FI(X,K)$ has an involution of the second kind if and only if $X$ has an involution and $K$ has an involution of the second kind (Theorem~\ref{has_involution_iff}). In this case, we also give a characterization of the involutions of the second kind on $FI(X,K)$ (Theorem~\ref{DecoInv}).

Note that if $\ast: K\to K$ is an involution, then $\ast=\Id_K$ or $\ast$ is an automorphism of order $2$, depending on whether $\ast$ is of the first or second kind. We will denote $\ast(a)$ by $a^{\ast}$ for each $a\in K$.

\begin{lemma}\label{rolbstar_involution}
If $\ast$ is an involution on $K$ and $\lambda$ is an involution on $X$, then $\rho_{\lambda}^{\ast}:FI(X,K)\to FI(X,K)$ given by
$$ \rho_{\lambda}^{\ast}(f)(x,y)=[f(\lambda(y),\lambda(x))]^{\ast},$$
for all $f\in FI(X,K)$ and $x,y\in X$, is an involution.
\end{lemma}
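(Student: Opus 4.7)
The plan is to verify the four things required: (a) that $\rho_\lambda^\ast(f)$ actually lies in $FI(X,K)$, (b) additivity, (c) the reverse-order product rule, and (d) involutivity. Each is a direct computation once we exploit the order-reversing property of $\lambda$ and the ring-involution properties of $\ast$.

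First I would check well-definedness. For $x\not\leq y$, the order-reversal gives $\lambda(y)\not\leq\lambda(x)$, so $f(\lambda(y),\lambda(x))=0$, and hence $\rho_\lambda^\ast(f)(x,y)=0^\ast=0$. Next, the finitary condition: for $x\leq y$, the subintervals $[u,v]\subseteq[x,y]$ with $u\neq v$ and $\rho_\lambda^\ast(f)(u,v)\neq 0$ are in bijection (via $(u,v)\mapsto(\lambda(v),\lambda(u))$) with the subintervals $[\lambda(v),\lambda(u)]\subseteq[\lambda(y),\lambda(x)]$ with $\lambda(v)\neq\lambda(u)$ and $f(\lambda(v),\lambda(u))\neq 0$; since $f\in FI(X,K)$, finiteness is automatic. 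Additivity of $\rho_\lambda^\ast$ follows at once from additivity of $\ast$.

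The main calculation is the product rule. For $x\leq y$,
\[
\rho_\lambda^\ast(fg)(x,y)=\bigl[(fg)(\lambda(y),\lambda(x))\bigr]^\ast=\Biggl[\sum_{\lambda(y)\leq z\leq\lambda(x)} f(\lambda(y),z)\,g(z,\lambda(x))\Biggr]^{\!\ast}.
\]
Reindex by $z=\lambda(w)$; since $\lambda^2=\Id_X$ and $\lambda$ reverses order, the condition $\lambda(y)\leq\lambda(w)\leq\lambda(x)$ is equivalent to $x\leq w\leq y$. Applying $\ast$ termwise (it is additive and anti-multiplicative) turns the sum into
\[
\sum_{x\leq w\leq y}\bigl[g(\lambda(w),\lambda(x))\bigr]^\ast\bigl[f(\lambda(y),\lambda(w))\bigr]^\ast=\sum_{x\leq w\leq y}\rho_\lambda^\ast(g)(x,w)\,\rho_\lambda^\ast(f)(w,y),
\]
which is precisely $(\rho_\lambda^\ast(g)\,\rho_\lambda^\ast(f))(x,y)$. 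Finally, $(\rho_\lambda^\ast)^2(f)(x,y)=\bigl[\rho_\lambda^\ast(f)(\lambda(y),\lambda(x))\bigr]^\ast=\bigl[\bigl[f(\lambda^2(x),\lambda^2(y))\bigr]^\ast\bigr]^\ast=f(x,y)$, using $\lambda^2=\Id_X$ and $\ast^2=\Id_K$.

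No real obstacle is expected; the only subtlety worth flagging is that the index substitution $z\mapsto\lambda(w)$ must preserve the running conditions of the convolution, which is exactly what the order-reversing defining property of an involution on $X$ guarantees. The reversal of the factor order in the product rule comes automatically from the anti-multiplicativity of $\ast$, which is consistent with whether $\ast$ is of the first or second kind.
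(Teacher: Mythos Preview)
Your proof is correct and follows essentially the same approach as the paper: verify well-definedness (vanishing off the order and the finitary condition via the $\lambda$-bijection of subintervals), additivity, the anti-multiplicativity by reindexing the convolution sum with $\lambda$, and involutivity using $\lambda^2=\Id_X$ and $\ast^2=\Id_K$. The only cosmetic difference is that the paper writes the intermediate sum directly over $\lambda(y)\leq\lambda(z)\leq\lambda(x)$ rather than substituting $z=\lambda(w)$, which amounts to the same reindexing.
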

\begin{proof}
 Firstly note that $\rho_{\lambda}^{\ast}(f)\in FI(X,K)$ for all $f\in FI(X,K)$; indeed, if $x\nleq y$, then $\lambda(y)\nleq \lambda(x)$ and so $\rho_{\lambda}^{\ast}(f)(x,y)=0$. Moreover, if $[u,v]\subseteq [x,y]$ is such that $u\neq v$ and $\rho_{\lambda}^{\ast}(f)(u,v)\neq 0$, then $[\lambda(v),\lambda(u)]\subseteq [\lambda(y),\lambda(x)]$, $\lambda(v)\neq \lambda(u)$ and $f(\lambda(v),\lambda(u))\neq 0$. Thus, since $f\in FI(X,K)$, then $\rho_{\lambda}^{\ast}(f)\in FI(X,K)$.

 Since $\ast$ is additive, then so is $\rho_{\lambda}^{\ast}$. Let $f,g\in FI(X,K)$ and $x\leq y$ in $X$. We have
 \begin{align*}
   \rho_{\lambda}^{\ast}(fg)(x,y) & = [(fg)(\lambda(y),\lambda(x))]^{\ast} \\
                                  & = \left[\sum_{\lambda(y) \leq \lambda(z)\leq \lambda(x)} f(\lambda(y),\lambda(z))g(\lambda(z),\lambda(x))\right]^{\ast}\\
                                  & = \sum_{x\leq z \leq y} [g(\lambda(z),\lambda(x))]^{\ast}[f(\lambda(y),\lambda(z))]^{\ast} \\
                                  & = \sum_{x\leq z \leq y} \rho_{\lambda}^{\ast}(g)(x,z)\rho_{\lambda}^{\ast}(f)(z,y) \\
                                  & = (\rho_{\lambda}^{\ast}(g)\rho_{\lambda}^{\ast}(f))(x,y)
 \end{align*}
and
$$\rho_{\lambda}^{\ast}(\rho_{\lambda}^{\ast}(f))(x,y)=[\rho_{\lambda}^{\ast}(f)(\lambda(y),\lambda(x))]^{\ast}
             =\{[f(\lambda(\lambda(x)),\lambda(\lambda(y)))]^{\ast}\}^{\ast}=f(x,y).$$
Thus, $\rho_{\lambda}^{\ast}(fg)=\rho_{\lambda}^{\ast}(g)\rho_{\lambda}^{\ast}(f)$ and $\rho_{\lambda}^{\ast}(\rho_{\lambda}^{\ast}(f))=f$ and so $\rho_{\lambda}^{\ast}$ is an involution.
\end{proof}

\begin{remark}
Note that if $\ast$ is an involution on $K$ and $\lambda$ is an involution on $X$, then
\begin{equation}\label{rolb_a}
  \rho_{\lambda}^{\ast}(a)=a^{\ast}, \forall a\in K
\end{equation}
and
\begin{equation}\label{rolb_e_x}
  \rho_{\lambda}^{\ast}(e_{x})= e_{\lambda(x)}, \forall x\in X.
\end{equation}
\end{remark}

\begin{lemma}\label{rolbstar_involution_scnd}
Let $X$ be a connected poset with an involution $\lambda$ and let $\ast$ be an involution on $K$. Then $\rho_{\lambda}^{\ast}$ is of the second kind if and only if $\ast$ is of the second kind.
\end{lemma}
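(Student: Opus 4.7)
The plan is very short because almost all the work has been set up by the earlier results. The key observation is that since $X$ is connected, Proposition~\ref{propcenFI} tells us $FI(X,K)$ is central, i.e., $Z(FI(X,K)) = K\delta$, which we identify with $K$ via $a \leftrightarrow a\delta$. So the ``kind'' of $\rho_\lambda^\ast$ is entirely determined by its restriction to this copy of $K$.

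The next step is to use identity \eqref{rolb_a} from the remark, which states $\rho_\lambda^\ast(a) = a^\ast$ for every $a \in K$ (again understood as $a\delta \in K\delta$). This shows that $\rho_\lambda^\ast|_{Z(FI(X,K))}$ is, under the canonical identification, exactly the map $\ast$. Consequently $\rho_\lambda^\ast$ fixes the center pointwise if and only if $a^\ast = a$ for every $a \in K$, i.e., if and only if $\ast = \Id_K$.

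By the definition of first/second kind given in the preliminaries, $\rho_\lambda^\ast$ is of the first kind precisely when it acts as the identity on $Z(FI(X,K))$, and $\ast$ is of the first kind precisely when $\ast = \Id_K$. Combining this with the previous paragraph yields the equivalence: $\rho_\lambda^\ast$ is of the second kind if and only if $\ast$ is of the second kind.

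There is no real obstacle here; the only thing to be careful about is making the identification $Z(FI(X,K)) = K\delta$ explicit, and citing Proposition~\ref{propcenFI} so that we may speak of an involution of the first/second kind in the sharp form ``trivial on $K$'' rather than ``trivial on a possibly larger center.'' Connectedness of $X$ is used exactly once, at this step.
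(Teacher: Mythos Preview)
Your proof is correct and follows exactly the same approach as the paper: use Proposition~\ref{propcenFI} to conclude that $Z(FI(X,K))=K\delta$, and then invoke \eqref{rolb_a} to identify $\rho_\lambda^\ast|_{Z(FI(X,K))}$ with $\ast$. The paper's version is simply more terse.
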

\begin{proof}
  Since $X$ is connected, then $FI(X,K)$ is central by Proposition~\ref{propcenFI}. Thus, by \eqref{rolb_a}, $\rho_{\lambda}^{\ast}$ is of the second kind if and only if $\ast$ is of the second kind.
\end{proof}

\begin{theorem}\label{has_involution_iff}
  Let $K$ be a field and $X$ a connected poset. Then $FI(X,K)$ has an involution of the second kind if and only if $K$ has an involution of the second kind and $X$ has an involution.
\end{theorem}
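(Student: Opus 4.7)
The backward implication is immediate from Lemmas \ref{rolbstar_involution} and \ref{rolbstar_involution_scnd}: given an involution $\ast$ of the second kind on $K$ and an involution $\lambda$ on $X$, the map $\rho_\lambda^\ast$ is an involution of the second kind on $FI(X,K)$.

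For the forward direction, suppose $\rho$ is an involution of the second kind on $FI(X,K)$. Since $X$ is connected, $FI(X,K)$ is central by Proposition \ref{propcenFI}, so its center equals $K\delta$. The restriction of $\rho$ to the center yields an involution on $K$, and it is not the identity precisely because $\rho$ is of the second kind; hence it has order $2$ and is itself of the second kind on $K$.

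It remains to construct an involution on $X$ from the action of $\rho$ on the primitive idempotents $\{e_x : x \in X\}$. By Proposition \ref{idempotent}, each $\rho(e_x)$ is a primitive idempotent. I would first show that every primitive idempotent $f \in FI(X,K)$ has a unique diagonal entry equal to $1$, which I call $\mu(f)$: each $f(z,z)$ is an idempotent in $K$, hence in $\{0,1\}$; and if $f(y_1,y_1) = f(y_2,y_2) = 1$ for distinct $y_1, y_2$, then using (\ref{e_xfe_y}) and the resulting identity $e_{y_1}fe_{y_1}=e_{y_1}$, one verifies that $fe_{y_1}f$ and $f - fe_{y_1}f$ are nonzero orthogonal idempotents summing to $f$, contradicting primitivity. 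Then define $\lambda(x) := \mu(\rho(e_x))$.

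The final step is to verify that $\lambda : X \to X$ is indeed an involution. For the order-reversing property, $x \leq y$ iff $e_x\, FI(X,K)\, e_y \neq 0$ (since it contains $e_{xy}$); applying the anti-automorphism $\rho$ yields $\rho(e_y)\, FI(X,K)\, \rho(e_x) \neq 0$, and then invoking the fact that for every primitive idempotent $f$ the module $FI(X,K)\, f$ is isomorphic to $FI(X,K)\, e_{\mu(f)}$ (together with the right-sided analogue) gives $e_{\lambda(y)}\, FI(X,K)\, e_{\lambda(x)} \neq 0$, so $\lambda(y) \leq \lambda(x)$. For $\lambda^2 = \mathrm{id}$, note that $e_{\lambda(x)}$ and $\rho(e_x)$ are primitive idempotents with the same $\mu$-value, so their left ideals are isomorphic; applying $\rho$ and using $\rho^2 = \mathrm{id}$, the right ideals generated by $\rho(e_{\lambda(x)})$ and $e_x$ are isomorphic, hence by the dual correspondence $\mu(\rho(e_{\lambda(x)})) = \mu(e_x) = x$. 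The main obstacle is establishing these module-theoretic correspondences rigorously in the potentially non-locally-finite setting, where care with indecomposable projective modules over $FI(X,K)$ is needed.
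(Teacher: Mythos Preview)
Your backward direction and the argument that $\rho|_K$ is an involution of the second kind on $K$ match the paper exactly.

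For the construction of $\lambda$, your route via the diagonal map $\mu$ and module isomorphisms is a genuine variant of the paper's argument, and it can be made to work, but the paper's approach is considerably shorter. The paper invokes \cite[Lemma~1]{KN} directly: every primitive idempotent of $FI(X,K)$ is \emph{conjugate} to $e_y$ for a unique $y\in X$. So one immediately has $\rho(e_x)=f_x e_{\lambda(x)} f_x^{-1}$ for some $f_x\in U(FI(X,K))$. With this in hand, $\lambda^2=\Id$ follows by applying $\rho$ once more and observing that $e_x$ is conjugate to $e_{\lambda^2(x)}$; and the order-reversing property follows from the chain
\[
x\le y \;\Leftrightarrow\; e_x\,FI(X,K)\,e_y\neq 0 \;\Leftrightarrow\; \rho(e_y)\,FI(X,K)\,\rho(e_x)\neq 0 \;\Leftrightarrow\; e_{\lambda(y)}\,FI(X,K)\,e_{\lambda(x)}\neq 0,
\]
where the last step is just left- and right-multiplication by the conjugating units $f_y^{-1}$ and $f_x$. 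No module-theoretic comparison is needed.

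The ``main obstacle'' you flag---establishing that $FI(X,K)f\cong FI(X,K)e_{\mu(f)}$ and its dual, in the possibly non-locally-finite setting---is exactly what \cite[Lemma~1]{KN} packages as conjugacy, and conjugacy is strictly easier to use here than a bare module isomorphism. Your uniqueness argument for $\mu(f)$ (via the decomposition $f=fe_{y_1}f+(f-fe_{y_1}f)$) is correct and is essentially part of the proof of that lemma, but you do not address existence of a nonzero diagonal entry, which in the non-locally-finite case is not entirely trivial and is again handled by \cite[Lemma~1]{KN}. In short: your strategy is sound, but citing the conjugacy lemma collapses the hard step and eliminates the module-theoretic detour.
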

\begin{proof}
If $K$ has an involution $\ast$ of the second kind and $X$ has an involution $\lambda$, then $\rho_{\lambda}^{\ast}$ is an involution of the second kind on $FI(X,K)$, by Lemmas~\ref{rolbstar_involution} and \ref{rolbstar_involution_scnd}.

Conversely, suppose $\rho$ is an involution of the second kind on $FI(X,K)$. Given $x\in X$, by \cite[Lemma~1]{KN}, $e_x$ is a primitive idempotent of $FI(X,K)$, and therefore $\rho(e_x)$ is also a primitive idempotent, by Proposition~\ref{idempotent}. Again by \cite[Lemma~1]{KN}, there is a unique $y\in X$ such that $\rho(e_x)$ is conjugate to $e_y$. Thus, $\rho$ induces a map $\lambda: X\to X$ such that, given $x\in X$, $\rho(e_x)$ is conjugate to $e_{\lambda(x)}$. We will show that $\lambda$ is an involution.

For each $x\in X$, let $f_x\in U(FI(X,K))$ such that $\rho(e_x)=f_xe_{\lambda(x)}f_x^{-1}$. We have
  \begin{align*}
  e_x=\rho^2(e_x) & =\rho(f_xe_{\lambda(x)}f_x^{-1})=\rho(f_x)^{-1}\rho(e_{\lambda(x)})\rho(f_x)\\
                  & = \rho(f_x)^{-1}f_{\lambda(x)}e_{\lambda^2(x)}f_{\lambda(x)}^{-1}\rho(f_x)\\
                  & = [\rho(f_x)^{-1}f_{\lambda(x)}]e_{\lambda^2(x)}[\rho(f_x)^{-1}f_{\lambda(x)}]^{-1}
  \end{align*}
and hence $\lambda^2(x)=x$. Let $x,y\in X$. By \eqref{e_xfe_y},
 \begin{align*}
            x\leq y   & \Leftrightarrow  e_xFI(X,K)e_y\neq 0\\
                      & \Leftrightarrow \rho(e_y)FI(X,K)\rho(e_x)\neq 0\\
                      & \Leftrightarrow f_ye_{\lambda(y)}f_y^{-1}FI(X,K)f_xe_{\lambda(x)}f_x^{-1}\neq 0\\
                      & \Leftrightarrow f_ye_{\lambda(y)}FI(X,K)e_{\lambda(x)}f_x^{-1}\neq 0\\
                      & \Leftrightarrow e_{\lambda(y)}FI(X,K)e_{\lambda(x)}\neq 0\\
                      & \Leftrightarrow \lambda(y)\leq {\lambda(x)}.
  \end{align*}
It follows that $\lambda$ is an involution.

Finally, since $FI(X,K)$ is a central algebra, by Proposition~\ref{propcenFI}, then $\rho|_K$ is an involution on $K$, which is of the second kind because $\rho$ is of the second kind.
\end{proof}

To give a characterization of the involutions of the second kind on $FI(X,K)$ we need some lemmas.

\begin{lemma}\label{analogo3.3}
Let $K$ be a field and $X$ a connected poset. Let $\rho$ be an involution of the second kind on $FI(X,K)$ that induces the involution $\lambda$ on $X$. Let $x,y\in X$ with $x\leq y$. Then we have:
\begin{description}
  \item (i) $\rho(e_y)(\lambda(y),\lambda(x))=0 \Leftrightarrow \rho(e_{\lambda(x)})(x,y)=0$;
  \item (i) $\rho(e_x)(\lambda(y),\lambda(x))=0 \Leftrightarrow \rho(e_{\lambda(y)})(x,y)=0$.
\end{description}
\end{lemma}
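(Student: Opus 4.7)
The plan is to apply $\rho$ to the localization identity \eqref{e_xfe_y} at the corners $(\lambda(y),\lambda(x))$ and $(x,y)$, and then evaluate the resulting product using the explicit formula $\rho(e_z)=f_z e_{\lambda(z)} f_z^{-1}$ established in the proof of Theorem~\ref{has_involution_iff}.

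For (i), note that $x\leq y$ gives $\lambda(y)\leq\lambda(x)$, so by \eqref{e_xfe_y}
$$e_{\lambda(y)}\rho(e_y)e_{\lambda(x)}=\rho(e_y)(\lambda(y),\lambda(x))\, e_{\lambda(y)\lambda(x)}.$$
Apply $\rho$: since $\rho$ reverses products, is additive, satisfies $\rho^2=\Id$, and acts on the central scalar $\rho(e_y)(\lambda(y),\lambda(x))\in K$ by $a\mapsto a^{\ast}$, one obtains
$$\rho(e_{\lambda(x)})\, e_y\, \rho(e_{\lambda(y)})=\rho(e_y)(\lambda(y),\lambda(x))^{\ast}\,\rho(e_{\lambda(y)\lambda(x)}).$$
Because $\rho$ is bijective, $\rho(e_{\lambda(y)\lambda(x)})\neq 0$, so the right-hand side vanishes iff $\rho(e_y)(\lambda(y),\lambda(x))=0$.

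Next, I would expand the left-hand side using $\rho(e_{\lambda(x)})=f_{\lambda(x)}e_x f_{\lambda(x)}^{-1}$ and $\rho(e_{\lambda(y)})=f_{\lambda(y)}e_y f_{\lambda(y)}^{-1}$, and apply \eqref{e_xfe_y} to $e_x f_{\lambda(x)}^{-1}e_y=f_{\lambda(x)}^{-1}(x,y)\,e_{xy}$ (using $x\leq y$) and then to $e_{xy}f_{\lambda(y)}e_y=f_{\lambda(y)}(y,y)\,e_{xy}$. This produces
$$\rho(e_{\lambda(x)})\,e_y\,\rho(e_{\lambda(y)})=f_{\lambda(x)}^{-1}(x,y)\, f_{\lambda(y)}(y,y)\, f_{\lambda(x)}\,e_{xy}\,f_{\lambda(y)}^{-1}.$$
Because $f_{\lambda(x)}$ and $f_{\lambda(y)}$ are invertible, their diagonal entries $f_{\lambda(y)}(y,y)$ and $f_{\lambda(x)}(x,x), f_{\lambda(y)}^{-1}(y,y)$ are nonzero, so evaluating $f_{\lambda(x)}e_{xy}f_{\lambda(y)}^{-1}$ at $(x,y)$ shows it is nonzero. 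Hence the whole product vanishes iff $f_{\lambda(x)}^{-1}(x,y)=0$. On the other hand, a direct computation gives $\rho(e_{\lambda(x)})(x,y)=(f_{\lambda(x)}e_x f_{\lambda(x)}^{-1})(x,y)=f_{\lambda(x)}(x,x)\,f_{\lambda(x)}^{-1}(x,y)$, which vanishes iff $f_{\lambda(x)}^{-1}(x,y)=0$. Chaining these three equivalences yields (i).

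For (ii) the argument is parallel: start from $e_{\lambda(y)}\rho(e_x)e_{\lambda(x)}=\rho(e_x)(\lambda(y),\lambda(x))\,e_{\lambda(y)\lambda(x)}$, apply $\rho$ to obtain $\rho(e_{\lambda(x)})e_x\rho(e_{\lambda(y)})=\rho(e_x)(\lambda(y),\lambda(x))^{\ast}\rho(e_{\lambda(y)\lambda(x)})$, then expand using the $f_z$'s and \eqref{e_xfe_y} to reduce both sides of the desired equivalence to the single condition $f_{\lambda(y)}(x,y)=0$. The main obstacle is bookkeeping: one must verify at each step that the various ``absorbing'' factors (entries like $f_z(z,z)$ and elements of the form $f_{\lambda(x)}e_{xy}f_{\lambda(y)}^{-1}$) are genuinely nonzero, which is where invertibility of each $f_z$ is used.
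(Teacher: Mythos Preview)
Your argument is correct. The paper does not give its own proof but simply cites \cite[Lemma~3.3]{BFS14}, whose proof for first-kind involutions carries over verbatim once one notes that $\rho$ acts on scalars by $a\mapsto a^{\ast}$; your computation is exactly this adaptation, spelled out in full, so it matches the intended approach.
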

\begin{proof}
  It is the same as for \cite[Lemma~3.3]{BFS14}.
\end{proof}

Let $Y\subseteq X$ be an arbitrary subset and consider $e_Y\in FI(X,K)$ defined by $e_Y(u,v)=1$ if $u=v\in Y$ and $e_Y(u,v)=0$ otherwise. Note that if $Y=\{x\}$, then $e_{\{x\}}=e_x$.

\begin{lemma}\label{analogo3.4}
Let $K$ be a field and $X$ a connected poset. Let $\rho$ be an involution of the second kind on $FI(X,K)$ that induces the involution $\lambda$ on $X$. Let $x,y\in X$ with $x\leq y$. Then, for any $Y\subseteq X$, $\rho(e_Y)(\lambda(y),\lambda(x))=\rho(e_{Y'})(\lambda(y),\lambda(x))$, where $Y'=\{z\in Y : \rho(e_z)(\lambda(z),\lambda(x))\neq 0 \text{ and } \rho(e_z)(\lambda(y),\lambda(z))\neq 0\}$.
\end{lemma}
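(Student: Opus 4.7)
The plan is to apply \eqref{e_xfe_y}, then $\rho$, and finally evaluate the resulting identity at $(x,y)$, using Lemma~\ref{analogo3.3} to identify which terms survive. Since $x\leq y$ forces $\lambda(y)\leq\lambda(x)$, \eqref{e_xfe_y} yields
$$e_{\lambda(y)}\,\rho(e_Y)\,e_{\lambda(x)}=\rho(e_Y)(\lambda(y),\lambda(x))\cdot e_{\lambda(y)\lambda(x)}.$$
Applying $\rho$ and using that $\rho$ restricts to an involution of $K=Z(FI(X,K))$ (by Proposition~\ref{propcenFI}), this rearranges to the key identity
$$\rho(e_{\lambda(x)})\,e_Y\,\rho(e_{\lambda(y)})=\rho\bigl(\rho(e_Y)(\lambda(y),\lambda(x))\bigr)\cdot\rho(e_{\lambda(y)\lambda(x)}),$$
and the analogous identity holds with $Y$ replaced by $Y'$.

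Next I would evaluate the left-hand side at $(x,y)$. Since $e_Y$ is supported on the diagonal pairs $(w,w)$ with $w\in Y$, this gives
$$\bigl(\rho(e_{\lambda(x)})\,e_Y\,\rho(e_{\lambda(y)})\bigr)(x,y)=\sum_{w\in Y\cap[x,y]}\rho(e_{\lambda(x)})(x,w)\,\rho(e_{\lambda(y)})(w,y).$$
Lemma~\ref{analogo3.3}(i) applied with $(x,w)$ in place of $(x,y)$, together with Lemma~\ref{analogo3.3}(ii) applied with $(w,y)$ in place of $(x,y)$, shows that a summand vanishes unless both $\rho(e_w)(\lambda(w),\lambda(x))\neq 0$ and $\rho(e_w)(\lambda(y),\lambda(w))\neq 0$, i.e.\ unless $w\in Y'$. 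Hence this sum is unchanged when $Y$ is replaced by $Y'$, and comparing the two instances of the key identity at $(x,y)$ gives
$$\bigl[\rho\bigl(\rho(e_Y)(\lambda(y),\lambda(x))\bigr)-\rho\bigl(\rho(e_{Y'})(\lambda(y),\lambda(x))\bigr)\bigr]\cdot c=0,$$
where $c:=\rho(e_{\lambda(y)\lambda(x)})(x,y)\in K$.

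Since $\rho|_K$ is injective, the lemma follows once $c\neq 0$ is established, and this is the only substantive remaining point. To handle it I would invoke the factorization $\rho(e_\eta)=f_\eta\,e_{\lambda(\eta)}\,f_\eta^{-1}$ with $f_\eta\in U(FI(X,K))$ produced in the proof of Theorem~\ref{has_involution_iff}, which identifies
$$\rho(e_{\lambda(x)})\,FI(X,K)\,\rho(e_{\lambda(y)})=f_{\lambda(x)}\bigl(e_x\,FI(X,K)\,e_y\bigr)f_{\lambda(y)}^{-1}=K\cdot\bigl(f_{\lambda(x)}\,e_{xy}\,f_{\lambda(y)}^{-1}\bigr)$$
as a one-dimensional $K$-subspace of $FI(X,K)$. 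Since $\rho(e_{\lambda(y)\lambda(x)})$ is a nonzero element of this subspace, it equals $k\cdot f_{\lambda(x)}\,e_{xy}\,f_{\lambda(y)}^{-1}$ for some $k\in K^{\times}$; evaluating at $(x,y)$ then gives $c=k\cdot f_{\lambda(x)}(x,x)\,f_{\lambda(y)}^{-1}(y,y)\neq 0$, since the diagonal entries of invertible elements of $FI(X,K)$ are nonzero.
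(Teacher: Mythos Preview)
Your argument is correct. The paper itself does not give a self-contained proof but simply refers to \cite[Lemma~3.4]{BFS14}; your approach --- pulling the scalar identity $e_{\lambda(y)}\rho(e_Y)e_{\lambda(x)}=\rho(e_Y)(\lambda(y),\lambda(x))e_{\lambda(y)\lambda(x)}$ back through $\rho$, evaluating at $(x,y)$, and using Lemma~\ref{analogo3.3} to discard the terms outside $Y'$ --- is exactly the natural adaptation of that argument to the second-kind setting, with the extra care of tracking $\rho|_K$ on the scalar factor and verifying $\rho(e_{\lambda(y)\lambda(x)})(x,y)\neq 0$ via the conjugacy $\rho(e_{\lambda(x)})=f_{\lambda(x)}e_xf_{\lambda(x)}^{-1}$.
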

\begin{proof}
  The proof is the same as for \cite[Lemma~3.4]{BFS14}.
\end{proof}

\begin{theorem}\label{DecoInv}
Let $K$ be a field and $X$ a connected poset. If $\rho$ is an involution of the second kind on $FI(X,K)$, then $\rho = \Psi \circ M \circ \rho_{\lambda}^{\ast}$, where $\Psi$ is an inner automorphism and $M$ is a multiplicative automorphism of $FI(X,K)$, $\lambda$ is the involution on $X$ induced by $\rho$, and $\ast=\rho|_K$.
\end{theorem}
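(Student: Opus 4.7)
The plan is to manufacture an algebra automorphism by composing $\rho$ with $\rho_{\lambda}^{\ast}$, so that the two anti-multiplicativities cancel, and then apply the automorphism decomposition theorem. By the proof of Theorem~\ref{has_involution_iff}, $\rho$ already determines the involution $\lambda$ on $X$, and since $FI(X,K)$ is central (Proposition~\ref{propcenFI}), $\ast := \rho|_K$ is a well-defined involution on $K$, necessarily of the second kind. Hence $\rho_{\lambda}^{\ast}$ exists by Lemma~\ref{rolbstar_involution} and is itself an involution of the second kind.

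I would then set $\Phi := \rho \circ \rho_{\lambda}^{\ast}$. The composition of two additive anti-homomorphisms is additive and multiplicative, and both factors are bijective, so $\Phi$ is an algebra automorphism of $FI(X,K)$. By Theorem~\ref{DecoAutFIP}, write $\Phi = \Psi \circ M \circ \widehat{\alpha}$ for an inner automorphism $\Psi$, a multiplicative automorphism $M$, and an automorphism $\alpha$ of $X$. Since $(\rho_{\lambda}^{\ast})^2 = \Id$, composing on the right with $\rho_{\lambda}^{\ast}$ yields
$$\rho = \Psi \circ M \circ \widehat{\alpha} \circ \rho_{\lambda}^{\ast},$$
and the whole argument reduces to showing that $\alpha = \Id_X$.

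To prove $\alpha = \Id_X$ I would evaluate $\Phi$ on the primitive idempotents $e_x$ in two different ways. Using \eqref{rolb_e_x}, $\Phi(e_x) = \rho(e_{\lambda(x)})$, which by the construction of $\lambda$ in the proof of Theorem~\ref{has_involution_iff} is conjugate to $e_{\lambda^2(x)} = e_x$. On the other hand, $\widehat{\alpha}(e_x) = e_{\alpha(x)}$, and any multiplicative automorphism fixes each $e_x$ (because $\sigma(x,x)^2 = \sigma(x,x)$ and $\sigma(x,x) \neq 0$ force $\sigma(x,x) = 1$), so $\Phi(e_x) = \Psi(e_{\alpha(x)})$ is conjugate to $e_{\alpha(x)}$. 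The uniqueness assertion of \cite[Lemma~1]{KN}—that distinct $e_x$ lie in distinct conjugacy classes of primitive idempotents—then forces $\alpha(x) = x$ for every $x$, so $\widehat{\alpha} = \Id$ and the desired decomposition follows. The only non-formal step is this last one; it is precisely the same idempotent-labeling uniqueness that underlies Theorem~\ref{has_involution_iff}, and I expect it to be the main obstacle to verify cleanly.
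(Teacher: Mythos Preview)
Your argument is correct and takes a genuinely different route from the paper's. The paper never invokes Theorem~\ref{DecoAutFIP}. Instead it constructs the conjugating element by hand: it defines $f\in I(X,K)$ by $f(u,v)=\rho(e_{\lambda(u)})(u,v)$, uses the technical Lemmas~\ref{analogo3.3} and~\ref{analogo3.4} to prove that $f\in FI(X,K)$, checks that $f(x,x)=1$ so $f$ is invertible, and then verifies directly that $f\cdot(\rho\circ\rho_\lambda^\ast)(e_y)=e_y f$ for every $y$, so that $\Psi_f\circ\rho\circ\rho_\lambda^\ast$ fixes every $e_y$ and is therefore multiplicative by \cite[Proposition~2.3]{BFS14}. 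Your approach sidesteps all of this by applying the automorphism decomposition theorem as a black box and then killing the $\widehat\alpha$-factor via the conjugacy-class argument on primitive idempotents; this is shorter and more conceptual, and in particular avoids the two lemmas needed to control membership in $FI(X,K)$. The paper's approach, on the other hand, yields an explicit formula for the inner part.

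One small point to tighten: your sentence ``composition of two additive anti-homomorphisms \dots\ so $\Phi$ is an algebra automorphism'' only gives a ring automorphism. For Theorem~\ref{DecoAutFIP} to apply you need $K$-linearity (the three factors $\Psi$, $M$, $\widehat\alpha$ are all $K$-linear, so the theorem cannot decompose a non-$K$-linear ring automorphism). This follows immediately because $\rho|_K=\ast=\rho_\lambda^\ast|_K$, so both factors are $\ast$-semilinear and their composite is $K$-linear; this is exactly the content of the citation \cite[Lemma~2.2]{US} in the paper's own proof. With that one line added, your argument is complete.
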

\begin{proof}
Let $\lambda$ be the involution on $X$ induced by $\rho$ and $\ast=\rho|_K$. Consider the involution $\rho_{\lambda}^{\ast}$ on $FI(X,K)$. Define $f\in I(X,K)$ by \begin{equation}\label{f}
f(u,v)=(\rho\circ \rho_{\lambda}^{\ast})(e_u)(u,v)=\rho(e_{\lambda(u)})(u,v),
\end{equation}
where the last equality follows from \eqref{rolb_e_x}. We can show that $f\in FI(X,K)$ as was done in the proof of Theorem~3.5 of \cite{BFS14}, replacing Lemmas~3.3 and 3.4 of \cite{BFS14} by Lemmas~\ref{analogo3.3} and \ref{analogo3.4}, respectively.

Let $y\in X$. For any $u\leq v$ in $X$ we have
\begin{align*}
[f(\rho\circ\rho_{\lambda}^{\ast})(e_y)](u,v) & = \sum_{u\leq t\leq v}f(u,t)(\rho\circ\rho_{\lambda}^{\ast})(e_y)(t,v)\\
                                          & = \sum_{u\leq t\leq v}(\rho\circ\rho_{\lambda}^{\ast})(e_u)(u,t)
                                          (\rho\circ\rho_{\lambda}^{\ast})(e_y)(t,v)\\
                                          & = [(\rho\circ\rho_{\lambda}^{\ast})(e_u)\cdot
                                          (\rho\circ\rho_{\lambda}^{\ast})(e_y)](u,v)\\
                                          & = [(\rho\circ\rho_{\lambda}^{\ast})(e_ue_y)](u,v)\\
                                          & = \left\{\!\!\begin{array}{ll}
                                                0 & \text{if } u\neq y\\
                                                (\rho\circ\rho_{\lambda}^{\ast})(e_y)(y,v) & \text{if } u=y\\
                                                \end{array}\right.\\
                                          & = \left\{\!\!\begin{array}{ll}
                                                0 & \text{if } u\neq y\\
                                                f(y,v) & \text{if } u=y\\
                                                \end{array}\right.\\
                                          & = \sum_{u\leq t\leq v}e_y(u,t)f(t,v)\\
                                          & = (e_yf)(u,v).
\end{align*}
Therefore $f(\rho\circ\rho_{\lambda}^{\ast})(e_y)=e_yf$, for all $y\in X$.

For each $x\in X$, let $g_x\in U(FI(X,K))$ such that $\rho(e_x)=g_xe_{\lambda(x)}g_x^{-1}$. Then $f(x,x)=\rho(e_{\lambda(x)})(x,x)=(g_{\lambda(x)}e_xg_{\lambda(x)}^{-1})(x,x)=1$, and so $f\in U(FI(X,K))$. Thus, for all $y\in X$, $(\rho\circ\rho_{\lambda}^{\ast})(e_y)=f^{-1}e_yf=\Psi_{f^{-1}}(e_y)$ and therefore
\begin{equation}\label{taquasemultiplicativo}
(\Psi_f\circ\rho\circ \rho_{\lambda}^{\ast})(e_y)=e_y.
\end{equation}

Now, $\rho|_K=\ast=\rho_{\lambda}^{\ast}|_K$, by \eqref{rolb_a}. Thus, $\rho\circ \rho_{\lambda}^{\ast}$ is an automorphism of $FI(X,K)$, by \cite[Lemma~2.2]{US}. Therefore, it follows from \eqref{taquasemultiplicativo} and \cite[Proposition~2.3]{BFS14}, that $\Psi_f\circ\rho\circ \rho_{\lambda}^{\ast}$ is a multiplicative automorphism, that is, there exists a multiplicative $\sigma\in I(X,K)$ such that $\Psi_f\circ\rho\circ \rho_{\lambda}^{\ast}=M_{\sigma}$ and then $\rho=\Psi_{f^{-1}}\circ M_{\sigma} \circ \rho_{\lambda}^{\ast}$.
\end{proof}

\section{Classification of involutions}\label{Classification}

From now on, the field $K$ has characteristic different from $2$ and $X$ is a connected poset such that every multiplicative automorphism of $FI(X,K)$ is inner. Thus, by Proposition~\ref{propcenFI}, $FI(X,K)$ is a central algebra. By \cite[Remark~4]{AutK}, every multiplicative automorphism of $FI(X,K)$ is inner if and only if the first cohomology group $H^1(X,K^{\times})$ of $X$ with values in the multiplicative group $K^{\times}$ is trivial.

In this section, we present necessary and sufficient conditions for two involutions of the second kind on $FI(X,K)$ to be equivalent.

\begin{theorem}~\label{teogeralclassiinvolu}
Let $K$ be a field of characteristic different from $2$ and $X$ a connected poset such that every multiplicative automorphism of $FI(X,K)$ is inner. Let $\rho_1$ and $\rho_2$ be involutions of the second kind on $FI(X,K)$. Then $\rho_1$ and $\rho_2$ are equivalent if and only if there exist an automorphism $\alpha$ of $X$ and an inner automorphism $\Psi$ of $FI(X,K)$ such that $\rho_1 =\Psi\circ\widehat{\alpha}\circ\rho_2\circ \widehat{\alpha}^{-1}\circ \Psi^{-1}$.
\end{theorem}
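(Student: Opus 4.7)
The plan is to prove the two directions separately; both rest on Theorem~\ref{DecoAutFIP}, and the forward direction additionally exploits the standing hypothesis on multiplicative automorphisms of $FI(X,K)$.

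For the ($\Leftarrow$) direction, given the displayed equation, the automorphism $\Phi := \widehat{\alpha}^{-1} \circ \Psi^{-1}$ of $FI(X,K)$ satisfies $\Phi \circ \rho_1 = \rho_2 \circ \Phi$ after one line of direct cancellation, so $\rho_1$ and $\rho_2$ are equivalent by definition.

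For the ($\Rightarrow$) direction, take any automorphism $\Phi$ realizing the equivalence, so $\rho_1 = \Phi^{-1} \circ \rho_2 \circ \Phi$. By Theorem~\ref{DecoAutFIP}, decompose $\Phi = \Psi_1 \circ M \circ \widehat{\beta}$ with $\Psi_1$ inner, $M$ multiplicative, and $\beta \in \Aut(X)$. The standing assumption of this section forces $M$ to be inner, and so $\Psi_b := \Psi_1 \circ M$ is an inner automorphism; say $\Psi_b = \Psi_b$ for some $b \in U(FI(X,K))$. Thus $\Phi = \Psi_b \circ \widehat{\beta}$ and substituting gives
$$ \rho_1 = \widehat{\beta}^{-1} \circ \Psi_b^{-1} \circ \rho_2 \circ \Psi_b \circ \widehat{\beta}. $$

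To match the form in the statement I would use the easy commutation identity
$$ \widehat{\gamma} \circ \Psi_a \circ \widehat{\gamma}^{-1} = \Psi_{\widehat{\gamma}(a)}, $$
valid for any $\gamma \in \Aut(X)$ and $a \in U(FI(X,K))$ and verified immediately from the definition of $\widehat{\gamma}$. Setting $\alpha := \beta^{-1}$ (so $\widehat{\alpha} = \widehat{\beta}^{-1}$) and $\Psi := \Psi_{\widehat{\alpha}(b^{-1})}$, this identity lets me commute $\widehat{\beta}^{-1}$ past $\Psi_b^{-1}$ (and $\widehat{\beta}$ past $\Psi_b$ on the other side) and land exactly on $\rho_1 = \Psi \circ \widehat{\alpha} \circ \rho_2 \circ \widehat{\alpha}^{-1} \circ \Psi^{-1}$. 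The conceptual content is Theorem~\ref{DecoAutFIP} combined with the inner-multiplicative hypothesis; the only remaining work is the bookkeeping of inverses in this rearrangement, which I do not expect to present any real obstacle.
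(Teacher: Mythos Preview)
Your proof is correct and follows essentially the same approach as the paper: both directions rest on Theorem~\ref{DecoAutFIP} together with the standing hypothesis that multiplicative automorphisms are inner, so that any automorphism of $FI(X,K)$ factors as an inner automorphism composed with some $\widehat{\alpha}$. The paper's write-up is marginally slicker only in that it takes the witnessing automorphism in the form $\Phi\circ\rho_2=\rho_1\circ\Phi$ (equivalently, works with $\Phi^{-1}$ in your notation), so that $\Phi=\Psi\circ\widehat{\alpha}$ substitutes directly into $\rho_1=\Phi\circ\rho_2\circ\Phi^{-1}$ without the commutation identity you invoke; but your version with the identity $\widehat{\gamma}\circ\Psi_a\circ\widehat{\gamma}^{-1}=\Psi_{\widehat{\gamma}(a)}$ is equally valid and the bookkeeping goes through exactly as you describe.
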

\begin{proof}
The involutions $\rho_1$ and $\rho_2$ are equivalent if and only if there exists an automorphism $\Phi$ of $FI(X,K)$ such that $\Phi \circ \rho_2 = \rho_1 \circ \Phi$. By Theorem~\ref{DecoAutFIP}, $\Phi= \Psi \circ \widehat{\alpha}$ where $\Psi$ is an inner automorphism and $\alpha$ is an automorphism of $X$. Thus, $\rho_1$ and $\rho_2$ are equivalent if and only if $\rho_1 = \Psi \circ \widehat{\alpha} \circ  \rho_2 \circ \widehat{\alpha}^{-1}\circ \Psi^{-1}$.
\end{proof}

Note that in the theorem above, $\widehat{\alpha}\circ\rho_2\circ \widehat{\alpha}^{-1}$ is an involution on $FI(X,K)$ and it is equivalent to $\rho_1$, via inner automorphism. Because of that, we start by looking for necessary and sufficient conditions for two involutions of the second kind on $FI(X,K)$ to be equivalent via inner automorphisms.

\subsection{Classification via inner automorphisms}\label{Classification_inner}

\begin{proposition}\label{psi_de_um_lado_iff}
 Let $\rho_1$ and $\rho_2$ be involutions (of the first or second kind) on $FI(X,K)$. There exists $u\in U(FI(X,K))$ such that $\Psi_u\circ\rho_2=\rho_1$ if and only if $\rho_1$ and $\rho_2$ induce the same involution on $X$ and $\rho_1|_K=\rho_2|_K$.
\end{proposition}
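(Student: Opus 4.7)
The plan is to prove both directions by exploiting that $Z(FI(X,K)) = K$ (Proposition~\ref{propcenFI}, since $X$ is connected) together with the decomposition in Theorem~\ref{DecoInv} and the standing hypothesis of Section~\ref{Classification} that every multiplicative automorphism is inner.

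For the ``only if'' direction, I would start from $\rho_1 = \Psi_u \circ \rho_2$ with $u \in U(FI(X,K))$. For any $a \in K$, centrality gives $\rho_2(a) \in Z(FI(X,K)) = K$, so
$$\rho_1(a) = u\rho_2(a)u^{-1} = \rho_2(a),$$
which already yields $\rho_1|_K = \rho_2|_K$. Next, for each $x \in X$ the element $\rho_i(e_x)$ is a primitive idempotent by Proposition~\ref{idempotent}, and by \cite[Lemma~1]{KN} there is a unique $y \in X$ to which it is conjugate; this $y$ defines the induced involution $\lambda_i$. Since
$$\rho_1(e_x) = u\rho_2(e_x)u^{-1}$$
is conjugate to $\rho_2(e_x)$, which in turn is conjugate to $e_{\lambda_2(x)}$, transitivity of conjugacy and the uniqueness in \cite[Lemma~1]{KN} force $\lambda_1(x) = \lambda_2(x)$.

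For the ``if'' direction, let $\ast$ denote the common restriction to $K$ and $\lambda$ the common induced involution on $X$. Applying Theorem~\ref{DecoInv} (whose proof goes through verbatim in the first-kind case, where $\ast = \Id_K$) I obtain
$$\rho_i = \Psi_{u_i} \circ M_{\sigma_i} \circ \rho_\lambda^{\ast}, \quad i = 1, 2,$$
with $u_i \in U(FI(X,K))$ and $M_{\sigma_i}$ multiplicative. By the section hypothesis each $M_{\sigma_i}$ equals an inner automorphism $\Psi_{v_i}$, hence $\rho_i = \Psi_{w_i} \circ \rho_\lambda^{\ast}$ where $w_i = u_i v_i$. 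Eliminating $\rho_\lambda^{\ast}$ via $\rho_\lambda^{\ast} = \Psi_{w_2^{-1}} \circ \rho_2$ yields
$$\rho_1 = \Psi_{w_1} \circ \Psi_{w_2^{-1}} \circ \rho_2 = \Psi_{w_1 w_2^{-1}} \circ \rho_2,$$
so $u := w_1 w_2^{-1}$ does the job.

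No serious obstacle arises. The only point demanding a word of justification is that Theorem~\ref{DecoInv} extends to involutions of the first kind, which is immediate since its proof nowhere uses that $\ast$ is nontrivial on $K$. Everything else is pure bookkeeping with inner automorphisms.
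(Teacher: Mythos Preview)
Your proof is correct and follows essentially the same route as the paper: both directions hinge on the decomposition $\rho_i=\Psi_{w_i}\circ\rho_\lambda^{\ast}$ obtained from Theorem~\ref{DecoInv} (together with the standing hypothesis that multiplicative automorphisms are inner), and the ``only if'' part reads off $\lambda_1=\lambda_2$ from conjugacy of primitive idempotents exactly as the paper does. Your argument for $\rho_1|_K=\rho_2|_K$ via centrality is in fact a bit more direct than the paper's, which detours through the decomposition before arriving at the same identity; and where you remark that Theorem~\ref{DecoInv} extends verbatim to the first-kind case, the paper simply cites \cite[Theorem~3.5]{BFS14} for that case.
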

\begin{proof}
Suppose there exists $u\in U(FI(X,K))$ such that $\Psi_u\circ\rho_2=\rho_1$. Let $\lambda_1$ and $\lambda_2$ be the involutions on $X$ induced by $\rho_1$ and $\rho_2$, respectively. For each $x\in X$, let $f_x\in U(FI(X,K))$ such that $\rho_2(e_x)=f_xe_{\lambda_2(x)}f_x^{-1}$. Then
$$\rho_1(e_x)=(\Psi_u\circ\rho_2)(e_x)=uf_xe_{\lambda_2(x)}f_x^{-1}u^{-1}=(uf_x)e_{\lambda_2(x)}(uf_x)^{-1}.$$
Thus $\lambda_1(x)=\lambda_2(x)$ for all $x\in X$ and so $\lambda_1=\lambda_2=:\lambda$. Denote $\rho_1|_K=\ast_1$ and $\rho_2|_K=\ast_2$. By Theorem~\ref{DecoInv} and \cite[Theorem~3.5]{BFS14}, there exist $v,w\in U(FI(X,K))$ such that $\rho_1=\Psi_v\circ \rho_{\lambda}^{\ast_1}$ and $\rho_2=\Psi_w\circ \rho_{\lambda}^{\ast_2}$. Thus, for all $k\in K$, it follows from \eqref{rolb_a} that
\begin{align*}
k^{\ast_1} & = \Psi_v(k^{\ast_1})=(\Psi_v\circ\rho_{\lambda}^{\ast_1})(k)= \rho_1(k) =  (\Psi_u\circ\rho_2)(k)=(\Psi_u\circ\Psi_w\circ \rho_{\lambda}^{\ast_2})(k)\\
           & = \Psi_{uw}(k^{\ast_2})= k^{\ast_2}.
\end{align*}
Hence $\rho_1|_K=\ast_1=\ast_2=\rho_2|_K$.

Conversely, suppose both involutions $\rho_1$ and $\rho_2$ induce the involution $\lambda$ on $X$ and $\rho_1|_K=\rho_2|_K=:\ast$. By Theorem~\ref{DecoInv} and \cite[Theorem~3.5]{BFS14}, there exist $v,w\in U(FI(X,K))$ such that $\rho_1=\Psi_v\circ \rho_{\lambda}^{\ast}$ and $\rho_2=\Psi_w\circ \rho_{\lambda}^{\ast}$. So
$$\rho_1=\Psi_v\circ \rho_{\lambda}^{\ast}=\Psi_v\circ\Psi_{w^{-1}}\circ\rho_2=\Psi_{vw^{-1}}\circ\rho_2.$$
\end{proof}

\begin{theorem}\label{necessary_inner}
Let $\rho_1$ and $\rho_2$ be involutions (of the first or second kind) on $FI(X,K)$. If there exists $u\in U(FI(X,K))$ such that $\Psi_u\circ\rho_2=\rho_1\circ\Psi_u$, then $\rho_1$ and $\rho_2$ induce the same involution on $X$ and $\rho_1|_K=\rho_2|_K$.
\end{theorem}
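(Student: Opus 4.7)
The plan is to reduce this statement to Proposition~\ref{psi_de_um_lado_iff} by massaging the hypothesis $\Psi_u \circ \rho_2 = \rho_1 \circ \Psi_u$ into the form $\Psi_w \circ \rho_1 = \rho_2$ for some $w \in U(FI(X,K))$. Once that is done, the conclusion about inducing the same involution on $X$ and agreeing on $K$ follows immediately (by symmetry of $\rho_1,\rho_2$ in the statement of Proposition~\ref{psi_de_um_lado_iff}).

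The first step is to use Lemma~\ref{rho_psi_outra_psi_rho} applied to $\rho_1$ and $u$, which yields
\[
\rho_1 \circ \Psi_u \;=\; \Psi_{\rho_1(u^{-1})} \circ \rho_1.
\]
Substituting this into the hypothesis gives $\Psi_u \circ \rho_2 = \Psi_{\rho_1(u^{-1})} \circ \rho_1$. Composing with $\Psi_{u^{-1}} = \Psi_u^{-1}$ on the left produces
\[
\rho_2 \;=\; \Psi_{u^{-1}} \circ \Psi_{\rho_1(u^{-1})} \circ \rho_1 \;=\; \Psi_{u^{-1}\rho_1(u^{-1})} \circ \rho_1,
\]
since $\Psi_a \circ \Psi_b = \Psi_{ab}$ for any $a,b \in U(FI(X,K))$. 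Setting $w = u^{-1}\rho_1(u^{-1}) \in U(FI(X,K))$, we thus obtain $\Psi_w \circ \rho_1 = \rho_2$.

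Finally, applying Proposition~\ref{psi_de_um_lado_iff} with the roles of $\rho_1$ and $\rho_2$ interchanged yields that $\rho_1$ and $\rho_2$ induce the same involution on $X$ and $\rho_1|_K = \rho_2|_K$, as required. There is no real obstacle here; the only thing to be careful about is the correct use of Lemma~\ref{rho_psi_outra_psi_rho}, which allows us to move the inner automorphism past $\rho_1$ at the cost of replacing $u$ by $\rho_1(u^{-1})$, so that both sides of the hypothesis can be expressed as an inner automorphism composed with an involution on the \emph{same} side.
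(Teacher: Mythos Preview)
Your proof is correct and follows essentially the same route as the paper: the paper cites Lemma~\ref{inner_iff} (whose proof is precisely the manipulation via Lemma~\ref{rho_psi_outra_psi_rho} that you carry out explicitly) together with Proposition~\ref{psi_de_um_lado_iff}. You have simply inlined the relevant direction of Lemma~\ref{inner_iff} rather than invoking it by name.
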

\begin{proof}
It follows directly from Lemma~\ref{inner_iff} and Proposition~\ref{psi_de_um_lado_iff}.
\end{proof}

It follows from Theorem~\ref{necessary_inner} that two involutions on $FI(X,K)$ that induce different involutions on $X$ or different involutions on $K$ are not equivalent via inner automorphisms. For this reason, we fix an involution $\lambda$ on $X$ and an involution $\ast$ of the second kind on $K$ and we are going to investigate when two involutions of the second kind on $FI(X,K)$ that induce $\lambda$ on $X$ and $\ast$ on $K$ are equivalent via inner automorphisms.

Firstly, we obtain an important characterization of the involutions on $FI(X,K)$ that induce $\lambda$ on $X$ and $\ast$ on $K$, under the assumptions of this section.

\begin{proposition}\label{dec_vsimetrico}
Let $K$ be a field of characteristic different from $2$ and $X$ a connected poset such that every multiplicative automorphism of $FI(X,K)$ is inner. Let $\lambda$ be an involution on $X$ and $\ast$ an involution of the second kind on $K$. If $\rho$ is an involution on $FI(X,K)$ that induces $\lambda$ on $X$ and $\ast$ on $K$, then there is a $\rho_{\lambda}^{\ast}$-symmetric $v\in U(FI(X,K))$ such that $\rho=\Psi_v\circ\rho_{\lambda}^{\ast}$.
\end{proposition}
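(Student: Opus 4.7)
The plan is to produce $v$ by first obtaining any $u$ with $\rho=\Psi_u\circ\rho_\lambda^\ast$, and then rescaling $u$ by a suitable element of $K^{\times}\subseteq Z(FI(X,K))$ (which does not change the inner automorphism $\Psi_u$) so that the result is $\rho_\lambda^\ast$-symmetric.

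First I would apply Theorem~\ref{DecoInv} to write $\rho=\Psi\circ M\circ \rho_\lambda^\ast$ where $\Psi$ is inner and $M$ is multiplicative. Under the standing hypothesis of Section~\ref{Classification}, every multiplicative automorphism of $FI(X,K)$ is inner, so $M$ is inner and hence so is $\Psi\circ M$. Thus $\rho=\Psi_u\circ\rho_\lambda^\ast$ for some $u\in U(FI(X,K))$. (Equivalently, this follows from Proposition~\ref{psi_de_um_lado_iff} applied to $\rho$ and $\rho_\lambda^\ast$, which induce the same $\lambda$ on $X$ and the same $\ast$ on $K$.)

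Next, since $\rho=\Psi_u\circ \rho_\lambda^\ast$ is an involution and $FI(X,K)$ is central by Proposition~\ref{propcenFI}, Proposition~\ref{propAutInner}(ii) applied with the involution $\rho_\lambda^\ast$ yields some $\rho_\lambda^\ast$-unitary $k\in K^{\times}$ with
\[
\rho_\lambda^\ast(u)=ku.
\]
Recalling \eqref{rolb_a}, $\rho_\lambda^\ast|_K=\ast$, so $k$ is $\ast$-unitary, i.e.\ $k\,k^\ast=1$. Proposition~\ref{k=a_ast_a-1} then provides $a\in K^{\times}$ with $k=a^\ast a^{-1}$.

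Finally, I set $v:=a^\ast u$. Since $a^\ast\in K\subseteq Z(FI(X,K))$, it commutes with $u$ and $\Psi_v=\Psi_{a^\ast}\circ \Psi_u=\Psi_u$, so $\Psi_v\circ\rho_\lambda^\ast=\Psi_u\circ\rho_\lambda^\ast=\rho$. Using that $\rho_\lambda^\ast$ reverses products, that $\rho_\lambda^\ast|_K=\ast$, and that $a\in Z(FI(X,K))$,
\[
\rho_\lambda^\ast(v)=\rho_\lambda^\ast(a^\ast u)=\rho_\lambda^\ast(u)\,\rho_\lambda^\ast(a^\ast)=ku\cdot a=a\cdot(a^\ast a^{-1})u=a^\ast u=v,
\]
so $v$ is $\rho_\lambda^\ast$-symmetric, as required. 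There is no real obstacle here; the only point that needed care was locating a scalar correction that both symmetrizes $u$ and leaves $\Psi_u$ unchanged, and this is precisely what Proposition~\ref{k=a_ast_a-1} supplies via Hilbert~90 style.
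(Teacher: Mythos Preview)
Your proof is correct and follows essentially the same route as the paper's: obtain $u$ from Theorem~\ref{DecoInv} (using that multiplicative automorphisms are inner), apply Proposition~\ref{propAutInner}(ii) to get a $\ast$-unitary $k$ with $\rho_\lambda^\ast(u)=ku$, invoke Proposition~\ref{k=a_ast_a-1} to write $k=a^\ast a^{-1}$, and rescale $u$ by a central scalar. The only cosmetic difference is that the paper sets $v=a^{-1}u$ while you set $v=a^\ast u$; both choices work and the verification is the same.
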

\begin{proof}
By Theorem~\ref{DecoInv}, $\rho=\Psi_u\circ\rho_{\lambda}^{\ast}$ for some $u\in U(FI(X,K))$. Since $FI(X,K)$ is a central algebra, then, by Proposition~\ref{propAutInner} (ii), there is $k\in K^{\times}$ such that $\rho_{\lambda}^{\ast}(u)=ku$ and $\rho_{\lambda}^{\ast}(k)=k^{\ast}=k^{-1}$. By Proposition~\ref{k=a_ast_a-1}, there is $a\in K^{\times}$ such that $k=a^{\ast}a^{-1}$. Let $v=a^{-1}u\in U(FI(X,K))$. Then
$$\rho_{\lambda}^{\ast}(v)=\rho_{\lambda}^{\ast}(u)\rho_{\lambda}^{\ast}(a)^{-1}=ku(a^{\ast})^{-1}=kuk^{-1}a^{-1}=v.$$
Moreover, since $uv^{-1}=a\in K$, then $\Psi_u=\Psi_v$, by \cite[Proposition~2.2 (i)]{ER2}. Therefore, $\rho=\Psi_v\circ\rho_{\lambda}^{\ast}$.
\end{proof}

Let $(X_1, X_2, X_3)$ be a $\lambda$-decomposition of $X$. Let $\epsilon: X_3 \to K_0^{\times}$ be a map and consider $u_{\epsilon} \in U(FI(X,K))$ defined by
\begin{equation}\label{u_epsilon}
u_{\epsilon}(x,y) = \begin{cases}
0 & \text{if } x \neq y\\
1 & \text{if } x= y \in X\backslash X_3\\
\epsilon(x) & \text{if } x= y \in X_3
\end{cases}.
\end{equation}
Note that if $X_3=\emptyset$ or $\epsilon(x)=1$ for all $x\in X_3$, then $u_{\epsilon}=\delta$. For any $x,y\in X_3$,
\begin{align*}
 \rho_{\lambda}^{\ast}(u_{\epsilon})(x,y) & =[u_{\epsilon}(\lambda(y),\lambda(x))]^{\ast}  \\
   & = \begin{cases}
0 & \text{if } x \neq y\\
1 & \text{if } x= y \in X\backslash X_3\\
[\epsilon(x)]^\ast & \text{if } x= y \in X_3
\end{cases}\\
 & = u_{\epsilon}(x,y).
\end{align*}
Thus, $\rho_{\lambda}^{\ast}(u_{\epsilon})=u_{\epsilon}$. By Proposition~\ref{propAutInner}, $\rho_{\epsilon}:=\Psi_{u_{\epsilon}} \circ \rho_{\lambda}^{\ast}$ is an involution on $FI(X,K)$ such that $\rho_{\epsilon} = \rho_{\lambda}^{\ast}$ when $X_3 = \emptyset$ or $\epsilon(x)=1$ for all $x\in X_3$. Moreover, for any $x\in X$, $\rho_{\epsilon}(e_x)=u_{\epsilon}e_{\lambda(x)}u_{\epsilon}^{-1}$ by \eqref{rolb_e_x}, therefore $\rho_{\epsilon}$ induces the involution $\lambda$ on $X$. And, for $a\in K$, $\rho_{\epsilon}(a)=\Psi_{u_{\epsilon}}(a^{\ast})=a^{\ast}$, by \eqref{rolb_a}. Thus, $\rho_{\epsilon}|_K=\ast$. In particular, $\rho_{\epsilon}$ is of the second kind, since $\ast$ is of the second kind. Hence we have the following proposition.

\begin{proposition}
Let $(X_1, X_2, X_3)$ be a $\lambda$-decomposition of $X$. Let $\epsilon: X_3 \to K_0^{\times}$ be a map and $u_{\epsilon} \in U(FI(X,K))$ as in \eqref{u_epsilon}. Then
\begin{equation}\label{rho_epsilon}
 \rho_{\epsilon}=\Psi_{u_{\epsilon}} \circ \rho_{\lambda}^{\ast}
\end{equation}
is an involution of the second kind on $FI(X,K)$ that induces $\lambda$ on $X$ and $\ast$ on $K$. Moreover, $\rho_{\epsilon} = \rho_{\lambda}^{\ast}$ when $X_3 = \emptyset$ or $\epsilon(x)=1$ for all $x\in X_3$.
\end{proposition}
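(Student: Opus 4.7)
The plan is to observe that essentially every required computation has already been carried out in the paragraph immediately preceding the statement, so the proof reduces to packaging those observations into the five assertions: that $\rho_\epsilon$ is an involution, that it induces $\lambda$ on $X$, that it restricts to $\ast$ on $K$, that it is of the second kind, and that $\rho_\epsilon = \rho_\lambda^\ast$ in the two degenerate cases.

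First I would record that $\rho_\lambda^\ast(u_\epsilon) = u_\epsilon$, which follows by direct evaluation from the definition of $\rho_\lambda^\ast$ using two inputs: $\lambda$ fixes each element of $X_3$ pointwise (by property (i) of the $\lambda$-decomposition), and $\epsilon$ takes values in the $\ast$-fixed field $K_0^\times$, so $[\epsilon(x)]^\ast = \epsilon(x)$ for every $x \in X_3$. Taking $c = 1 \in Z(FI(X,K))$ in Proposition~\ref{propAutInner}(i), this $\rho_\lambda^\ast$-symmetry of $u_\epsilon$ yields that $\rho_\epsilon = \Psi_{u_\epsilon} \circ \rho_\lambda^\ast$ is indeed an involution.

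Next, to verify that $\rho_\epsilon$ induces $\lambda$ on $X$ and restricts to $\ast$ on $K$, I would apply $\rho_\epsilon$ to each $e_x$ and to an arbitrary $a \in K$. From \eqref{rolb_e_x}, $\rho_\epsilon(e_x) = \Psi_{u_\epsilon}(e_{\lambda(x)}) = u_\epsilon e_{\lambda(x)} u_\epsilon^{-1}$ exhibits $\rho_\epsilon(e_x)$ as conjugate to $e_{\lambda(x)}$, which by the definition of the induced involution (as in the proof of Theorem~\ref{has_involution_iff}) identifies the induced map as $\lambda$. From \eqref{rolb_a} together with the fact that $a^\ast \in K \subseteq Z(FI(X,K))$, the inner automorphism $\Psi_{u_\epsilon}$ fixes $a^\ast$, giving $\rho_\epsilon(a) = a^\ast$, so $\rho_\epsilon|_K = \ast$. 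Since $\ast$ is of the second kind, so is $\rho_\epsilon$.

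The final assertion is immediate: if $X_3 = \emptyset$ or if $\epsilon(x) = 1$ for all $x \in X_3$, then $u_\epsilon = \delta$ by definition, so $\Psi_{u_\epsilon}$ is the identity automorphism and $\rho_\epsilon = \rho_\lambda^\ast$. I do not expect any real obstacle; the whole proposition is a bookkeeping statement assembled from the preceding calculation, and the only substantive tool invoked is Proposition~\ref{propAutInner}(i), used to convert the $\rho_\lambda^\ast$-symmetry of $u_\epsilon$ into the involution property of $\rho_\epsilon$.
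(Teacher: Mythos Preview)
Your proposal is correct and follows exactly the same approach as the paper: the proposition is stated as a summary of the computations carried out in the paragraph immediately preceding it, namely verifying $\rho_{\lambda}^{\ast}(u_{\epsilon})=u_{\epsilon}$, invoking Proposition~\ref{propAutInner} to get the involution property, and then using \eqref{rolb_e_x} and \eqref{rolb_a} to read off the induced involution on $X$ and the restriction to $K$. There is nothing to add.
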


\begin{lemma} \label{lemaiffsimetrico}
Let $u\in U(FI(X,K))$ be $\rho_{\epsilon}$-symmetric. Then $u=v\rho_{\epsilon}(v)$ for some $v\in U(FI(X,K))$ if and only if $u(x,x) \in K_1$, for all $x \in X_3$.
\end{lemma}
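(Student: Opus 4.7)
My plan is to handle the forward direction by a direct calculation and the reverse direction in two moves: a diagonal reduction, followed by extraction of a formal square root.

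For the forward direction, suppose $u = v\rho_{\epsilon}(v)$ and fix $x \in X_3$, so $\lambda(x) = x$. Since $u_{\epsilon}$ is diagonal and scalars in $K$ commute, one computes
\[\rho_{\epsilon}(v)(x,x) = u_{\epsilon}(x,x)\,[v(x,x)]^{\ast}\,u_{\epsilon}(x,x)^{-1} = [v(x,x)]^{\ast},\]
and reading off the $(x,x)$-entry of the convolution $v\rho_{\epsilon}(v)$ then gives $u(x,x) = v(x,x)\,[v(x,x)]^{\ast} \in K_1$.

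For the reverse direction, I would first build a diagonal $v_0 \in U(\FI)$ whose ``norm'' matches $u$ on the diagonal. Using the hypothesis, choose for each $x \in X_3$ an $a_x \in K^{\times}$ with $u(x,x) = a_x a_x^{\ast}$ and set $v_0(x,x) = a_x$; for $x \in X_1$ set $v_0(x,x) = u(x,x)$; for $x \in X_2$ set $v_0(x,x) = 1$. Since $u$ is $\rho_{\epsilon}$-symmetric, one has $u(\lambda(x),\lambda(x)) = [u(x,x)]^{\ast}$ whenever $x \in X_1 \cup X_2$, and a short diagonal computation then shows $(v_0\rho_{\epsilon}(v_0))(x,x) = u(x,x)$ for every $x \in X$. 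Setting $u' := v_0^{-1} u\,\rho_{\epsilon}(v_0)^{-1}$ yields a $\rho_{\epsilon}$-symmetric element of $U(\FI)$ with $u'(x,x) = 1$ for all $x \in X$.

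It remains to produce a $\rho_{\epsilon}$-symmetric square root of $u'$. Write $u' = \delta + h$ with $h$ of zero diagonal (so $\rho_{\epsilon}(h) = h$) and define
\[v_1 := \sum_{k \geq 0}\binom{1/2}{k} h^k,\]
the formal binomial expansion of $(\delta + h)^{1/2}$, whose coefficients lie in $K$ because $\Char K \neq 2$. The key verification is that $v_1 \in \FI$: for fixed $x \leq y$ in $X$, every nonzero $h^k(u,v)$ with $u,v \in [x,y]$ and $u \neq v$ comes from a strictly increasing chain $u = z_0 < \cdots < z_k = v$ whose consecutive links lie in the finite set $S := \{(a,b) \in [x,y]^2 : a \neq b,\ h(a,b) \neq 0\}$; strict monotonicity forces $k \leq |S|$ and confines the endpoints to the finitely many vertices appearing in $S$, giving simultaneously local finiteness of the sum defining $v_1(u,v)$ and the finitary condition on $v_1$. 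The formal identity $(1+t)^{1/2}(1+t)^{1/2} = 1+t$ then produces $v_1^2 = \delta + h = u'$ entrywise, while $\rho_{\epsilon}(h^k) = h^k$ (from $\rho_{\epsilon}(h) = h$ and anti-multiplicativity), together with the fact that the coefficients $\binom{1/2}{k}$ lie in the prime subfield of $K$ (hence are fixed by $\rho_{\epsilon}$), gives $\rho_{\epsilon}(v_1) = v_1$. Putting $v := v_0 v_1$ then yields $v\rho_{\epsilon}(v) = v_0 v_1 \rho_{\epsilon}(v_1)\rho_{\epsilon}(v_0) = v_0 u'\rho_{\epsilon}(v_0) = u$, as required.

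The main obstacle is the square-root step: one must simultaneously confirm that the binomial series defines a genuine element of $\FI$ (this is precisely where the finitary structure of $\FI$ is essential) and that the formal identity $(1+t)^{1/2}(1+t)^{1/2} = 1+t$ transports to give $v_1^2 = u'$ inside $\FI$. Once this is in place, the diagonal reduction and the final identity $v\rho_{\epsilon}(v) = u$ are routine.
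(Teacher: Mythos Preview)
Your proof is correct and takes a genuinely different route from the paper's. For the forward direction both arguments coincide. For the converse, the paper constructs $v$ directly by an explicit six-case formula according to the $\lambda$-decomposition $(X_1,X_2,X_3)$: it sets $v(x,y)$ equal to $\delta_{xy}$, $u(x,y)$, $u(x,y)/2$, $0$, $u(x,y)$, or $a_x$ according to where $x$ and $y$ sit, and then verifies $(v\rho_\epsilon(v))(x,y)=u(x,y)$ case by case. Your approach instead first normalises the diagonal via a diagonal $v_0$, reducing to a $\rho_\epsilon$-symmetric $u'=\delta+h$ with $h$ strictly upper, and then extracts $v_1=(\delta+h)^{1/2}$ via the binomial series; the finitary hypothesis is what makes the series locally finite and the Vandermonde identity in $\mathbb{Z}[\tfrac12][[t]]$ gives $v_1^2=u'$. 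The paper's argument is entirely elementary and explicit, avoiding any appeal to formal power series, while yours is more conceptual and would transport unchanged to other settings where a square root of a unipotent element is available. One small point worth making explicit in your write-up: the assertion that $\binom{1/2}{k}\in K$ when $\Char K\neq 2$ rests on the fact that, as a rational number, $\binom{1/2}{k}=(-1)^{k+1}C_{k-1}/2^{2k-1}$ (with $C_{k-1}$ a Catalan integer), so its denominator is a power of $2$; this also places the coefficient in the prime subfield, as you need for $\rho_\epsilon$-invariance.
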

\begin{proof}
If there exists $v\in U(FI(X,K))$ such that $u=v\rho_{\epsilon}(v)$, then for all $x \in X_3$,
\begin{align*}
  u(x,x) & = v(x,x)\rho_{\epsilon}(v)(x,x)= v(x,x)(u_{\epsilon}\rho_{\lambda}^{\ast}(v)u_{\epsilon}^{-1})(x,x)\\
         & = v(x,x)u_{\epsilon}(x,x)[v(\lambda(x),\lambda(x))]^{\ast}u_{\epsilon}^{-1}(x,x)\\
         & = v(x,x)[v(x,x)]^{\ast}u_{\epsilon}(x,x)[u_{\epsilon}(x,x)]^{-1}\\
         & =v(x,x)[v(x,x)]^{\ast}\in K_1.
\end{align*}

Conversely, suppose that for each $x \in X_3$ there exists $a_x\in K^{\times}$ such that $u(x,x)=a_xa_x^{\ast}$. For $x \leq y$ in $X$ there are six possibilities:
\begin{enumerate}[(1)]
\item $x,y\in X_1$;
\item  $x,y\in X_2$;
\item $x \in X_1$ and $y \in X_2$;
\item $x \in X_1$  and  $y \in X_3$;
\item $x \in X_3$ and $y \in X_2$;
\item $x = y \in X_3$.
\end{enumerate}
So we define $v \in I(X,K)$ as follows, for each pair $x\leq y$ in $X$:
$$v(x,y)=\begin{cases}
\delta_{xy} & \text{if } x , y \in X_1\\
u(x,y) & \text{if } x , y \in X_2\\
u(x,y)/2 & \text{if } x \in X_1 \text{ and } y \in X_2\\
0 &\text{if } x \in X_1 \text{ and } y \in X_3\\
u(x,y) & \text{if } x \in X_3 \text{ and } y \in X_2\\
a_x & \text{if } x = y \in X_3
\end{cases}.$$
If $[r,s]\subseteq [x,y]$ is such that $r\neq s$ and $v(r,s)\neq 0$, then $r,s\in X_2$, or $r\in X_1$ and $s\in X_2$, or $r\in X_3$ and $s\in X_2$. In any of these cases, we get $u(r,s)\neq 0$. Since $u\in FI(X,K)$, the number of such intervals $[r,s]$ must be finite, hence $v\in FI(X,K)$. Moreover, clearly $v$ is invertible.

Let $x\leq y$ in $X$. We are going to prove that $u(x,y) = (v\rho_{\epsilon}(v))(x,y)$ for each of the six possibilities listed above.

(1. $x,y \in X_1$) In this case, $\lambda(x), \lambda(y) \in X_2$ and if $t \in X$ is such that $x \leq t \leq y$, then $t \in X_1$. Thus, $v(x,t) = 0$ for all $x<t\leq y$, so
\begin{align*}
(v\rho_{\epsilon}(v))(x,y) & = \sum_{x\leq t\leq y}v(x,t)\rho_{\epsilon}(v)(t,y)=v(x,x)\rho_{\epsilon}(v)(x,y)\\
                           & = u_{\epsilon}(x,x)\rho_{\lambda}^{\ast}(v)(x,y)u_{\epsilon}^{-1}(y,y)=[v(\lambda(y), \lambda(x))]^{\ast}=[u(\lambda(y), \lambda(x))]^{\ast}\\
                           & =\rho_{\lambda}^{\ast}(u)(x,y)=u_{\epsilon}(x,x)\rho_{\lambda}^{\ast}(u)(x,y)u_{\epsilon}^{-1}(y,y)=\rho_{\epsilon}(u)(x,y)\\
                           & = u(x,y).
\end{align*}

(2. $x,y \in X_2$) We have $\lambda(x), \lambda(y) \in X_1$ and if $t \in X$ is such that $x\leq t \leq y$, then $t \in X_2$. So if $x\leq t < y$, then $v(\lambda(y), \lambda(t)) = 0$, therefore
\begin{align*}
(v\rho_{\epsilon}(v))(x,y) & = \sum_{x\leq t\leq y}v(x,t)(u_{\epsilon}\rho_{\lambda}^{\ast}(v)u_{\epsilon}^{-1})(t,y)\\
                           & = \sum_{x\leq t\leq y}v(x,t)u_{\epsilon}(t,t)[v(\lambda(y),\lambda(t))]^{\ast}u_{\epsilon}^{-1}(y,y)\\
                           & = v(x,y)[v(\lambda(y), \lambda(y))]^{\ast}\\
                           & = u(x,y).
\end{align*}

(3. $x\in X_1$, $y\in X_2$) In this case, if $x\leq t\leq y$, then $t \in X_1 \cup X_2 \cup X_3$. If $x<t<y$ and $t\in X_1\cup X_3$, then $v(x,t)=0$. And if $x<t<y$ and $t \in X_2$, then $\lambda(t) \in X_1$, therefore,
$$\rho_{\epsilon}(v)(t,y)= u_{\epsilon}(t,t)[v(\lambda(y),\lambda(t))]^{\ast}u_{\epsilon}^{-1}(y,y)=[v(\lambda(y),\lambda(t))]^{\ast}=0.$$
Thus,
 \begin{align*}
 (v\rho_{\epsilon}(v))(x,y) & = \sum_{x\leq t\leq y}v(x,t)\rho_{\epsilon}(v)(t,y)\\
                            & = v(x,x)\rho_{\epsilon}(v)(x,y) + v(x,y)\rho_{\epsilon}(v)(y,y)\\
                            & = u_{\epsilon}(x,x)[v(\lambda(y),\lambda(x))]^{\ast}u_{\epsilon}^{-1}(y,y)\\
                            & \quad  +\frac{u(x,y)}{2}u_{\epsilon}(y,y)[v(\lambda(y),\lambda(y))]^{\ast}u_{\epsilon}^{-1}(y,y)\\
                            & = \left[\frac{u(\lambda(y),\lambda(x))}{2}\right]^{\ast}+\frac{u(x,y)}{2}\\
                            & = \frac{\rho_{\lambda}^{\ast}(u)(x,y)}{2}+\frac{u(x,y)}{2}\\
                            & = \frac{u_{\epsilon}(x,x)\rho_{\lambda}^{\ast}(u)(x,y)u_{\epsilon}^{-1}(y,y)}{2}+\frac{u(x,y)}{2}\\
                            & = \frac{\rho_{\epsilon}(u)(x,y)}{2}+\frac{u(x,y)}{2}=\frac{u(x,y)}{2}+\frac{u(x,y)}{2}\\
                            & = u(x,y).
\end{align*}

(4. $x\in X_1$, $y\in X_3$) If $x\leq t \leq y$, then $t \in X_1 \cup X_3$, and $t\in X_3$ implies $t=y$. Then
 \begin{align*}
 (v\rho_{\epsilon}(v))(x,y) & = \sum_{x\leq t\leq y}v(x,t)\rho_{\epsilon}(v)(t,y)\\
                            & = v(x,x)\rho_{\epsilon}(v)(x,y) + v(x,y)\rho_{\epsilon}(v)(y,y)\\
                            & = u_{\epsilon}(x,x)[v(\lambda(y),\lambda(x))]^{\ast}u_{\epsilon}^{-1}(y,y)+0\\
                            & = [v(y,\lambda(x))]^{\ast}[\epsilon(y)]^{-1} = [u(y,\lambda(x))]^{\ast}[\epsilon(y)]^{-1}\\
                            & = u_{\epsilon}(x,x)[u(\lambda(y),\lambda(x))]^{\ast}u_{\epsilon}^{-1}(y,y)u_{\epsilon}(y,y)[\epsilon(y)]^{-1}\\
                            & = \rho_{\epsilon}(u)(x,y)\epsilon(y)[\epsilon(y)]^{-1}\\
                            & = u(x,y).
\end{align*}

(5. $x\in X_3$, $y\in X_2$) If $x\leq t \leq y$, then $t \in X_2 \cup X_3$, and $t\in X_3$ implies $t=x$. Thus
 \begin{align*}
 (v\rho_{\epsilon}(v))(x,y) & = \sum_{x\leq t\leq y}v(x,t)\rho_{\epsilon}(v)(t,y)\\
                            & = v(x,x)u_{\epsilon}(x,x)[v(\lambda(y),\lambda(x))]^{\ast}u_{\epsilon}^{-1}(y,y)\\
                            & \quad + \sum_{x<t\leq y}v(x,t)u_{\epsilon}(t,t)[v(\lambda(y),\lambda(t))]^{\ast}u_{\epsilon}^{-1}(y,y)\\
                            & = 0+v(x,y)u_{\epsilon}(y,y)[v(\lambda(y),\lambda(y))]^{\ast}u_{\epsilon}^{-1}(y,y)\\
                            & = u(x,y).
\end{align*}

(6. $x,y \in X_3$) In this case, $x=y$ and so
\begin{align*}
(v\rho_{\epsilon}(v))(x,y) & = (v\rho_{\epsilon}(v))(x,x)=v(x,x)\rho_{\epsilon}(v)(x,x)\\
                           & = a_xu_{\epsilon}(x,x)[v(\lambda(x),\lambda(x))]^{\ast}u_{\epsilon}^{-1}(x,x)\\
                           & = a_x[v(\lambda(x),\lambda(x))]^{\ast}=a_x[v(x,x)]^{\ast}\\
                           & = a_xa_x^{\ast}=u(x,x)\\
                           & = u(x,y).
\end{align*}

We thus conclude that $u=v\rho_{\epsilon}(v)$.
\end{proof}

Since $\rho_{\epsilon} = \rho_{\lambda}^{\ast}$ when $X_3 = \emptyset$, the following corollary is immediate.

\begin{corollary} \label{cor_rho_simetrico}
Let $u\in U(FI(X,K))$ such that $\rho_{\lambda}^{\ast}(u)=u$. If $X_3 = \emptyset$, then $u=v\rho_{\lambda}^{\ast}(v)$ for some $v\in U(FI(X,K))$.
\end{corollary}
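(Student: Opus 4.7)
The plan is to derive this as an immediate consequence of Lemma~\ref{lemaiffsimetrico}. First I would observe that when $X_3 = \emptyset$, the map $\epsilon : X_3 \to K_0^{\times}$ is vacuous and the element $u_{\epsilon}$ defined in \eqref{u_epsilon} reduces to the identity $\delta$ of $FI(X,K)$. Consequently $\Psi_{u_{\epsilon}} = \Id$ and
\[
\rho_{\epsilon} \;=\; \Psi_{u_{\epsilon}} \circ \rho_{\lambda}^{\ast} \;=\; \rho_{\lambda}^{\ast}.
\]
So any $\rho_{\lambda}^{\ast}$-symmetric element $u \in U(FI(X,K))$ is automatically $\rho_{\epsilon}$-symmetric.

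Next I would apply Lemma~\ref{lemaiffsimetrico} to this $u$. The lemma requires that $u(x,x) \in K_1$ for every $x \in X_3$; since $X_3 = \emptyset$, this condition is vacuously satisfied. Therefore there exists $v \in U(FI(X,K))$ with $u = v\rho_{\epsilon}(v) = v\rho_{\lambda}^{\ast}(v)$, which is the desired conclusion.

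There is essentially no obstacle here, since the whole point is that the delicate part of Lemma~\ref{lemaiffsimetrico}, namely the hypothesis $u(x,x) \in K_1$ on the diagonal entries over $X_3$, becomes empty as soon as $X_3 = \emptyset$. In particular, no appeal to Proposition~\ref{k=a_ast_a-1} is needed in this case, because the construction of $v$ in the proof of Lemma~\ref{lemaiffsimetrico} only invokes the scalars $a_x$ for $x \in X_3$, and there are no such $x$ to worry about.
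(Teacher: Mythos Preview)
Your proof is correct and follows exactly the same approach as the paper: the paper states just before the corollary that ``since $\rho_{\epsilon} = \rho_{\lambda}^{\ast}$ when $X_3 = \emptyset$, the following corollary is immediate,'' which is precisely the observation you spell out. You have simply made explicit the two facts (that $u_{\epsilon}=\delta$ and that the condition $u(x,x)\in K_1$ is vacuous) that the paper leaves implicit.
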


In the case where $X_3=\emptyset$ we have the following result.

\begin{theorem}~\label{x_3vazio}
Let $K$ be a field of characteristic different from $2$ and $X$ a connected poset such that every multiplicative automorphism of $FI(X,K)$ is inner. Let $\ast$ be an involution of the second kind on $K$ and $\lambda$ an involution on $X$ such that $\lambda(x)\neq x$ for all $x\in X$. Then every involution on $FI(X,K)$ that induces $\lambda$ on $X$ and $\ast$ on $K$ is equivalent to $\rho_{\lambda}^{\ast}$ (via inner automorphisms).
\end{theorem}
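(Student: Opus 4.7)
The plan is to chain together three ingredients already set up in the paper. Let $\rho$ be an involution of the second kind on $FI(X,K)$ inducing $\lambda$ on $X$ and $\ast$ on $K$. First, Proposition~\ref{dec_vsimetrico} lets me write $\rho = \Psi_v \circ \rho_{\lambda}^{\ast}$ for some $\rho_{\lambda}^{\ast}$-symmetric $v \in U(FI(X,K))$. Next, since $X_3 = \emptyset$, Corollary~\ref{cor_rho_simetrico} produces $w \in U(FI(X,K))$ with $v = w\,\rho_{\lambda}^{\ast}(w)$. Finally, from $\rho = \Psi_{w\rho_{\lambda}^{\ast}(w)} \circ \rho_{\lambda}^{\ast}$, Lemma~\ref{inner_iff} (applied with $\rho_1 = \rho$ and $\rho_2 = \rho_{\lambda}^{\ast}$) yields an inner automorphism $\Psi$ of $FI(X,K)$ satisfying $\Psi \circ \rho = \rho_{\lambda}^{\ast} \circ \Psi$, which is exactly the desired equivalence via an inner automorphism.

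There is essentially no obstacle in the proof of the theorem itself: all the substantive work was carried out inside Lemma~\ref{lemaiffsimetrico} and its corollary, where an explicit ``square root'' $w$ satisfying $v = w\,\rho_{\epsilon}(w)$ was constructed from a symmetric $v$ under the diagonal hypothesis $v(x,x) \in K_1$ for $x \in X_3$. With $X_3 = \emptyset$ that hypothesis is vacuous, so every $\rho_{\lambda}^{\ast}$-symmetric unit is automatically such a norm, and the theorem collapses to a routine bookkeeping step. The only point that requires a moment of care is matching the roles of $\rho_1$ and $\rho_2$ in Lemma~\ref{inner_iff} against the factorization above so that the inner automorphism ends up on the correct side of the intertwining relation.
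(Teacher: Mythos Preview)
Your proof is correct and follows essentially the same approach as the paper: apply Proposition~\ref{dec_vsimetrico} to write $\rho=\Psi_v\circ\rho_{\lambda}^{\ast}$ with $v$ $\rho_{\lambda}^{\ast}$-symmetric, invoke Corollary~\ref{cor_rho_simetrico} (since $X_3=\emptyset$) to factor $v=w\rho_{\lambda}^{\ast}(w)$, and conclude via Lemma~\ref{inner_iff}. The only difference is notational (the paper uses $u,v$ where you use $v,w$).
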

\begin{proof}
Let $\rho$ be an involution on $FI(X,K)$ that induces $\lambda$ on $X$ and $\ast$ on $K$. By Proposition~\ref{dec_vsimetrico}, there is $u\in U(FI(X,K))$ such that $\rho_{\lambda}^{\ast}(u)=u$ and $\rho=\Psi_u\circ\rho_{\lambda}^{\ast}$. By Corollary~\ref{cor_rho_simetrico}, there is $v\in U(FI(X,K))$ such that $u=v\rho_{\lambda}^{\ast}(v)$. Thus, by Lemma~\ref{inner_iff}, $\rho$ is equivalent to $\rho_{\lambda}^{\ast}$ via inner automorphisms.
\end{proof}

From now on we will analyze the case where $X_3\neq \emptyset$.

Note that if $u\in FI(X,K)$ is $\rho_\lambda^\ast$-symmetric, then for each $x\in X_3$, $u(x,x)=\rho_\lambda^\ast(u)(x,x)=[u(\lambda(x),\lambda(x))]^\ast=[u(x,x)]^\ast$. Thus, for each $v \in U(FI(X,K))$ such that $\rho_\lambda^\ast(v)=v$ we can define a map $\epsilon_v : X_3 \to K_0^{\times}$ by $\epsilon_v(x)= v(x,x)$.

\begin{lemma}\label{rho_equiv_rho_epsilon}
Let $\rho$ be an involution on $FI(X,K)$ that induces $\lambda$ on $X$ and $\ast$ on $K$. Then there is a $\rho_\lambda^\ast$-symmetric $v \in U(FI(X,K))$ such that $\rho$ is equivalent to $\rho_{\epsilon_v}$, via inner automorphisms.
\end{lemma}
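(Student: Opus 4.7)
The plan is to produce $v$ directly from the decomposition supplied by Proposition~\ref{dec_vsimetrico}, then compare $\rho$ to the involution $\rho_{\epsilon_v}$ by identifying the ``difference'' between them with an element whose diagonal entries on $X_3$ lie in $K_1$, so Lemma~\ref{lemaiffsimetrico} and Lemma~\ref{inner_iff} finish the job.

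First, I would invoke Proposition~\ref{dec_vsimetrico} to write $\rho = \Psi_v\circ \rho_\lambda^\ast$ for some $\rho_\lambda^\ast$-symmetric $v\in U(FI(X,K))$. Since $v$ is $\rho_\lambda^\ast$-symmetric, each diagonal entry $v(x,x)$ with $x\in X_3$ satisfies $[v(x,x)]^\ast = v(x,x)$, so the prescription $\epsilon_v(x):=v(x,x)$ indeed gives a well-defined map $\epsilon_v\colon X_3\to K_0^\times$. This yields the candidate $\rho_{\epsilon_v} = \Psi_{u_{\epsilon_v}}\circ\rho_\lambda^\ast$ from \eqref{rho_epsilon}, against which $\rho$ will be compared.

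Next, I would compute the composition $\rho$ relative to $\rho_{\epsilon_v}$. Writing $u := v u_{\epsilon_v}^{-1}$, the identity $\rho = \Psi_v\circ\rho_\lambda^\ast = \Psi_u\circ \Psi_{u_{\epsilon_v}}\circ\rho_\lambda^\ast = \Psi_u\circ\rho_{\epsilon_v}$ makes clear that equivalence of $\rho$ and $\rho_{\epsilon_v}$ via an inner automorphism is (through Lemma~\ref{inner_iff}) reduced to writing $u = w\,\rho_{\epsilon_v}(w)$ for some $w\in U(FI(X,K))$. For this I would apply Lemma~\ref{lemaiffsimetrico}, which has two hypotheses to verify:

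\begin{enumerate}
\item[(a)] $u$ is $\rho_{\epsilon_v}$-symmetric. Using $\rho_{\epsilon_v}(h)=u_{\epsilon_v}\rho_\lambda^\ast(h)u_{\epsilon_v}^{-1}$, the symmetry $\rho_\lambda^\ast(v)=v$ and $\rho_\lambda^\ast(u_{\epsilon_v})=u_{\epsilon_v}$, a short manipulation gives $\rho_{\epsilon_v}(u_{\epsilon_v}^{-1}) = u_{\epsilon_v}^{-1}$ and $\rho_{\epsilon_v}(v) = u_{\epsilon_v} v u_{\epsilon_v}^{-1}$, hence $\rho_{\epsilon_v}(v u_{\epsilon_v}^{-1}) = u_{\epsilon_v}^{-1}\cdot u_{\epsilon_v} v u_{\epsilon_v}^{-1} = v u_{\epsilon_v}^{-1}=u$.
\item[(b)] $u(x,x)\in K_1$ for every $x\in X_3$. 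Because $u_{\epsilon_v}$ is diagonal and $u_{\epsilon_v}(x,x)=\epsilon_v(x)=v(x,x)$, one has $u(x,x) = v(x,x)\cdot v(x,x)^{-1} = 1 \in K_1$.
\end{enumerate}

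Lemma~\ref{lemaiffsimetrico} then yields $w\in U(FI(X,K))$ with $u = w\,\rho_{\epsilon_v}(w)$, so $\rho = \Psi_{w\rho_{\epsilon_v}(w)}\circ\rho_{\epsilon_v}$, and Lemma~\ref{inner_iff} furnishes an inner automorphism implementing the equivalence between $\rho$ and $\rho_{\epsilon_v}$. The only non-bureaucratic step is the symmetry check in (a) and the verification that the diagonal entries of $u$ on $X_3$ land in $K_1$ --- which is precisely why we chose $\epsilon_v(x) = v(x,x)$ in the first place, rendering $u(x,x) = 1$ automatically. I expect no serious obstacle beyond tracking the formulas carefully.
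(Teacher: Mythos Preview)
Your proposal is correct and follows essentially the same route as the paper's own proof: invoke Proposition~\ref{dec_vsimetrico} to get a $\rho_\lambda^\ast$-symmetric $v$ with $\rho=\Psi_v\circ\rho_\lambda^\ast$, set $u=vu_{\epsilon_v}^{-1}$, verify that $u$ is $\rho_{\epsilon_v}$-symmetric with $u(x,x)=1\in K_1$ on $X_3$, and then apply Lemma~\ref{lemaiffsimetrico} together with Lemma~\ref{inner_iff}. The computations in your items (a) and (b) match the paper's verifications line for line.
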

\begin{proof}
By Proposition~\ref{dec_vsimetrico}, there is $v\in U(FI(X,K))$ such that $\rho_{\lambda}^{\ast}(v)=v$ and $\rho=\Psi_v\circ\rho_{\lambda}^{\ast}$. Since $\rho_{\epsilon_v}=\Psi_{u_{\epsilon_v}}\circ\rho_{\lambda}^{\ast}$, by \eqref{rho_epsilon}, then $\rho=\Psi_v\circ\Psi_{u_{\epsilon_v}^{-1}}\circ\rho_{\epsilon_v}=\Psi_{vu_{\epsilon_v}^{-1}}\circ\rho_{\epsilon_v}$. Now
$$
 \rho_{\epsilon_v}(vu_{\epsilon_v}^{-1}) = \rho_{\epsilon_v}(u_{\epsilon_v})^{-1} \rho_{\epsilon_v}(v)=u_{\epsilon_v}^{-1}\Psi_{u_{\epsilon_v}}(\rho_{\lambda}^{\ast}(v))
                                         = u_{\epsilon_v}^{-1}\Psi_{u_{\epsilon_v}}(v)= vu_{\epsilon_v}^{-1}.
$$
Moreover, for each $x\in X_3$,
$$(vu_{\epsilon_v}^{-1})(x,x)=v(x,x)[u_{\epsilon_v}(x,x)]^{-1}=\epsilon_v(x)[\epsilon_v(x)]^{-1}=1\in K_1.$$
Thus, by Lemma~\ref{lemaiffsimetrico}, there exists $w\in U(FI(X,K))$ such that $vu_{\epsilon_v}^{-1}=w\rho_{\epsilon_v}(w)$. It follows that $\rho=\Psi_{w\rho_{\epsilon_v}(w)}\circ\rho_{\epsilon_v}$ and hence $\rho$ is equivalent to $\rho_{\epsilon_v}$, via inner automorphisms, by Lemma~\ref{inner_iff}.
\end{proof}

It follows from lemma above that every involution $\rho$ on $FI(X,K)$ that induces $\lambda$ on $X$ and $\ast$ on $K$ is equivalent, via inner automorphisms, to $\rho_\epsilon$ for some map $\epsilon: X_3 \to K_0^{\times}$. Thus, to get a classification of involutions via inner automorphisms it is enough to find out what conditions $\epsilon_1,\epsilon_2: X_3 \to K_0^{\times}$ must satisfy for $\rho_{\epsilon_1}$ and $\rho_{\epsilon_2}$ to be equivalent via inner automorphisms.

\begin{lemma}\label{rho_es_equiv1}
Let $\epsilon_1,\epsilon_2: X_3 \to K_0^{\times}$ be maps. The involutions $\rho_{\epsilon_1}$ and $\rho_{\epsilon_2}$ are equivalent via inner automorphisms if and only if there is $k\in K_0^{\times}$ such that, for each $x\in X_3$, $\epsilon_1(x)=ka_x\epsilon_2(x)$ for some $a_x\in K_1$.
\end{lemma}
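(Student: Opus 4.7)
The plan is to reduce the equivalence statement, via Lemma~\ref{inner_iff} and the explicit form $\rho_{\epsilon_i}=\Psi_{u_{\epsilon_i}}\circ\rho_\lambda^\ast$, to a question about an element of the form $v\rho_{\epsilon_2}(v)$, and then apply Lemma~\ref{lemaiffsimetrico}. More precisely, by Lemma~\ref{inner_iff}, $\rho_{\epsilon_1}$ and $\rho_{\epsilon_2}$ are equivalent via inner automorphisms iff there exists $v\in U(FI(X,K))$ with $\rho_{\epsilon_1}=\Psi_{v\rho_{\epsilon_2}(v)}\circ\rho_{\epsilon_2}$. Writing both involutions in the form $\Psi_{u_{\epsilon_i}}\circ\rho_\lambda^\ast$ and cancelling $\rho_\lambda^\ast$, this becomes $\Psi_{u_{\epsilon_1}}=\Psi_{v\rho_{\epsilon_2}(v)u_{\epsilon_2}}$. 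Since $FI(X,K)$ is central (Proposition~\ref{propcenFI}), two inner automorphisms coincide iff the defining elements differ by a unit of $K$, so the condition is
\[
v\rho_{\epsilon_2}(v)=k\,u_{\epsilon_1}u_{\epsilon_2}^{-1}\quad\text{for some } k\in K^\times\text{ and }v\in U(FI(X,K)).
\]

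Next, I would observe that $u_{\epsilon_1}u_{\epsilon_2}^{-1}$ is already $\rho_{\epsilon_2}$-symmetric. Indeed, $u_{\epsilon_1}$ and $u_{\epsilon_2}$ are diagonal (so commute), both are fixed by $\rho_\lambda^\ast$ by the computation done just before~\eqref{rho_epsilon}, and hence $\rho_{\epsilon_2}(u_{\epsilon_i})=\Psi_{u_{\epsilon_2}}(u_{\epsilon_i})=u_{\epsilon_i}$ for $i=1,2$; applying the antimultiplicative property gives $\rho_{\epsilon_2}(u_{\epsilon_1}u_{\epsilon_2}^{-1})=u_{\epsilon_2}^{-1}u_{\epsilon_1}=u_{\epsilon_1}u_{\epsilon_2}^{-1}$. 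Since $v\rho_{\epsilon_2}(v)$ is automatically $\rho_{\epsilon_2}$-symmetric, applying $\rho_{\epsilon_2}$ to both sides of the displayed equation forces $k^\ast=k$, i.e.\ $k\in K_0^\times$. Conversely, any $k\in K_0^\times$ makes the right-hand side $\rho_{\epsilon_2}$-symmetric.

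The final step invokes Lemma~\ref{lemaiffsimetrico}: the $\rho_{\epsilon_2}$-symmetric element $k\,u_{\epsilon_1}u_{\epsilon_2}^{-1}$ is of the form $v\rho_{\epsilon_2}(v)$ iff its diagonal values at points of $X_3$ lie in $K_1$. Off $X_3$ this is automatic, and on $X_3$ the diagonal value at $x$ is $k\,\epsilon_1(x)\,\epsilon_2(x)^{-1}$. Hence equivalence via inner automorphisms is equivalent to the existence of $k\in K_0^\times$ with $k\,\epsilon_1(x)\,\epsilon_2(x)^{-1}\in K_1$ for every $x\in X_3$; setting $a_x=k\,\epsilon_1(x)\,\epsilon_2(x)^{-1}\in K_1$ and replacing $k$ by $k^{-1}\in K_0^\times$ rewrites this as $\epsilon_1(x)=k^{-1}a_x\epsilon_2(x)$, which is the desired form up to renaming the scalar.

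The main obstacle I expect is purely bookkeeping: keeping track that $\rho_{\epsilon_2}$ (rather than $\rho_\lambda^\ast$) is the right involution to apply Lemma~\ref{lemaiffsimetrico} to, and carefully extracting the constant $k$ from $K^\times$ down to $K_0^\times$ via the symmetry argument. Once those two points are in place, the equivalence is essentially a direct translation between the conditions on $v\rho_{\epsilon_2}(v)$ and the diagonal values of $u_{\epsilon_1}u_{\epsilon_2}^{-1}$.
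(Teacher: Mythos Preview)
Your proposal is correct and follows essentially the same route as the paper's proof: both reduce via Lemma~\ref{inner_iff} to the relation $v\rho_{\epsilon_2}(v)=k\,u_{\epsilon_1}u_{\epsilon_2}^{-1}$ for some $k\in K^\times$, and then invoke Lemma~\ref{lemaiffsimetrico} to translate this into the diagonal condition $k\,\epsilon_1(x)\epsilon_2(x)^{-1}\in K_1$ on $X_3$. The only cosmetic difference is how $k$ is pinned down in $K_0^\times$: you apply $\rho_{\epsilon_2}$ to both sides and use symmetry, while the paper evaluates at a point of $X_3$ and uses that $a_x,\epsilon_1(x),\epsilon_2(x)\in K_0^\times$.
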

\begin{proof}
If $\rho_{\epsilon_1}$ and $\rho_{\epsilon_2}$ are equivalent via inner automorphisms, then, by Lemma~\ref{inner_iff}, there exists $v\in U(FI(X,K))$ such that $\rho_{\epsilon_1}=\Psi_{v\rho_{\epsilon_2}(v)}\circ\rho_{\epsilon_2}$. On the other hand,
\begin{equation}\label{rho_e1_rho_e2}
  \rho_{\epsilon_1}=\Psi_{u_{\epsilon_1}} \circ \rho_{\lambda}^{\ast}=\Psi_{u_{\epsilon_1}} \circ \Psi_{u_{\epsilon_2}^{-1}}\circ\rho_{\epsilon_2}=\Psi_{u_{\epsilon_1}u_{\epsilon_2}^{-1}}\circ\rho_{\epsilon_2},
\end{equation}
by \eqref{rho_epsilon}. Thus $\Psi_{v\rho_{\epsilon_2}(v)}=\Psi_{u_{\epsilon_1}u_{\epsilon_2}^{-1}}$, so by \cite[Proposition~2.2 (i)]{ER2} and Proposition~\ref{propcenFI}, there is $k\in K^{\times}$ such that $v\rho_{\epsilon_2}(v)=ku_{\epsilon_1}u_{\epsilon_2}^{-1}$. By Lemma~\ref{lemaiffsimetrico}, for each $x\in X_3$, there is $a_x\in K_1$ such that $(v\rho_{\epsilon_2}(v))(x,x)=a_x$. Therefore, by \eqref{u_epsilon},
\begin{equation}\label{a_e1_e2}
  a_x=ku_{\epsilon_1}(x,x)[u_{\epsilon_2}(x,x)]^{-1}=k\epsilon_1(x)[\epsilon_2(x)]^{-1}.
\end{equation}
Since $a_x\in K_1\subseteq K_0^\times$ and $\epsilon_1(x),\epsilon_2(x)\in K_0^\times$, then $k\in K_0^\times$. Thus, by \eqref{a_e1_e2}, $\epsilon_1(x)=k^{-1}a_x\epsilon_2(x)$ with $k^{-1}\in K_0^\times$ and $a_x\in K_1$, for each $x\in X_3$.

Conversely, suppose there is $k\in K_0^{\times}$ such that, for each $x\in X_3$, $\epsilon_1(x)=ka_x\epsilon_2(x)$ for some $a_x\in K_1$. By \eqref{rho_e1_rho_e2} and \cite[Proposition~2.2 (i)]{ER2},
\begin{equation}\label{k_rho_e1_rho_e2}
\rho_{\epsilon_1}=\Psi_{k^{-1}u_{\epsilon_1}u_{\epsilon_2}^{-1}}\circ\rho_{\epsilon_2}.
\end{equation}
Since $u_{\epsilon_1}u_{\epsilon_2}^{-1}$ and $k$ are $\rho_{\epsilon_2}$-symmetric, then so is $k^{-1}u_{\epsilon_1}u_{\epsilon_2}^{-1}$. Moreover, for any $x\in X_3$,
\begin{align*}
(k^{-1}u_{\epsilon_1}u_{\epsilon_2}^{-1})(x,x) & = k^{-1}u_{\epsilon_1}(x,x)[u_{\epsilon_2}(x,x)]^{-1}\\
                                               & = k^{-1}\epsilon_1(x)[\epsilon_2(x)]^{-1}=a_x\in K_1.
\end{align*}
By Lemma~\ref{lemaiffsimetrico}, there is $v\in U(FI(X,K))$ such that $k^{-1}u_{\epsilon_1}u_{\epsilon_2}^{-1}=v\rho_{\epsilon_2}(v)$. Thus, by \eqref{k_rho_e1_rho_e2}, $\rho_{\epsilon_1}=\Psi_{v\rho_{\epsilon_2}(v)}\circ\rho_{\epsilon_2}$, therefore $\rho_{\epsilon_1}$ and $\rho_{\epsilon_2}$ are equivalent via inner automorphisms, by Lemma~\ref{inner_iff}.
\end{proof}

Let $\pi:K_0^{\times}\to K_0^{\times}/K_1$ be the canonical homomorphism. For any map $\epsilon: X_3 \to K_0^{\times}$, let $\theta_{\epsilon}=\pi\circ\epsilon$. We define on the set $(K_0^{\times}/K_1)^{X_3}$ of all maps from $X_3$ to $K_0^{\times}/K_1$ the following equivalence relation: $\theta_1, \theta_2\in (K_0^{\times}/K_1)^{X_3}$ are equivalent if there exists $g\in K_0^{\times}/K_1$ such that $\theta_1=L_g\circ\theta_2$, where $L_g:K_0^{\times}/K_1\to K_0^{\times}/K_1$ is the left multiplication by $g$.

\begin{remark}\label{theta=theta_epsilon}
For each $\theta\in (K_0^{\times}/K_1)^{X_3}$ there is $\epsilon: X_3 \to K_0^{\times}$ such that $\theta=\theta_{\epsilon}$. Indeed, for each $x\in X_3$ take an $a_x\in K_0^{\times}$ such that $\theta(x)=a_xK_1$ and define $\epsilon : X_3 \to K_0^{\times}$ by $\epsilon(x)=a_x$.
\end{remark}

\begin{corollary}\label{rho_es_equiv2}
Let $\epsilon_1,\epsilon_2: X_3 \to K_0^{\times}$ be maps. The involutions $\rho_{\epsilon_1}$ and $\rho_{\epsilon_2}$ are equivalent via inner automorphisms if and only if $\theta_{\epsilon_1}$ and $\theta_{\epsilon_2}$ are equivalent.
\end{corollary}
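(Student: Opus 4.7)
The plan is to translate Lemma~\ref{rho_es_equiv1} directly into the language of the quotient group $K_0^{\times}/K_1$ via the canonical homomorphism $\pi$. The corollary is essentially a formal rewriting, since the equivalence relation on $(K_0^{\times}/K_1)^{X_3}$ was designed precisely to encode the condition in Lemma~\ref{rho_es_equiv1}.

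First I would argue the forward direction. If $\rho_{\epsilon_1}$ and $\rho_{\epsilon_2}$ are equivalent via inner automorphisms, then Lemma~\ref{rho_es_equiv1} gives $k\in K_0^{\times}$ and, for each $x\in X_3$, an element $a_x\in K_1$ with $\epsilon_1(x)=ka_x\epsilon_2(x)$. Apply the canonical homomorphism $\pi:K_0^{\times}\to K_0^{\times}/K_1$ to both sides. Since $\pi(a_x)=1$, I get $\theta_{\epsilon_1}(x)=\pi(k)\,\theta_{\epsilon_2}(x)$ for all $x\in X_3$. Setting $g:=\pi(k)\in K_0^{\times}/K_1$, this says exactly $\theta_{\epsilon_1}=L_g\circ \theta_{\epsilon_2}$, so $\theta_{\epsilon_1}$ and $\theta_{\epsilon_2}$ are equivalent.

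For the converse, suppose $\theta_{\epsilon_1}=L_g\circ \theta_{\epsilon_2}$ for some $g\in K_0^{\times}/K_1$. Lift $g$ to some $k\in K_0^{\times}$ with $\pi(k)=g$. Then for each $x\in X_3$, the equality $\pi(\epsilon_1(x))=\pi(k\epsilon_2(x))$ in $K_0^{\times}/K_1$ forces $\epsilon_1(x)(k\epsilon_2(x))^{-1}\in K_1$, i.e., there exists $a_x\in K_1$ with $\epsilon_1(x)=ka_x\epsilon_2(x)$ (using that $K_0^{\times}$ is abelian and $K_1$ is a subgroup). Lemma~\ref{rho_es_equiv1} then yields that $\rho_{\epsilon_1}$ and $\rho_{\epsilon_2}$ are equivalent via inner automorphisms.

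There is no real obstacle here; the only thing to be mildly careful about is that the lift $k$ of $g$ lies in $K_0^{\times}$ (which is automatic since $\pi$ is surjective onto $K_0^{\times}/K_1$) and that $K_1$ being a subgroup lets us absorb inverses of its elements without leaving $K_1$. Everything else is a direct application of Lemma~\ref{rho_es_equiv1}.
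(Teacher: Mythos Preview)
Your proof is correct and follows essentially the same approach as the paper's: both directions apply Lemma~\ref{rho_es_equiv1} and pass through the canonical projection $\pi$, with the paper writing out the coset equalities explicitly where you phrase things in terms of lifts and the kernel of $\pi$. There is no meaningful difference in the argument.
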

\begin{proof}
If $\rho_{\epsilon_1}$ and $\rho_{\epsilon_2}$ are equivalent via inner automorphisms, then there is $k\in K_0^{\times}$ such that, for each $x\in X_3$, $\epsilon_1(x)=ka_x\epsilon_2(x)$ for some $a_x\in K_1$, by Lemma~\ref{rho_es_equiv1}. Thus, for any $x\in X_3$, we have
\begin{align*}
\theta_{\epsilon_1}(x) & = \pi(\epsilon_1(x))=\epsilon_1(x)K_1=(ka_x\epsilon_2(x))K_1 = (kK_1)(\epsilon_2(x)K_1)\\
                       & = (kK_1)\pi(\epsilon_2(x))=(kK_1)\theta_{\epsilon_2}(x)=(L_{kK_1}\circ \theta_{\epsilon_2})(x).
\end{align*}
Therefore, $\theta_{\epsilon_1}=L_{kK_1}\circ \theta_{\epsilon_2}$.

Conversely, if there is $g=kK_1\in K_0^{\times}/K_1$ such that $\theta_{\epsilon_1}=L_{g}\circ \theta_{\epsilon_2}$, then for all $x\in X_3$,
$$\epsilon_1(x)K_1=\theta_{\epsilon_1}(x)=(L_g\circ\pi\circ\epsilon_2)(x)=(kK_1)(\epsilon_2(x)K_1)=(k\epsilon_2(x))K_1.$$
So for each $x\in X_3$ there is $a_x\in K_1$ such that $\epsilon_1(x)=ka_x\epsilon_2(x)$. By Lemma~\ref{rho_es_equiv1}, $\rho_{\epsilon_1}$ and $\rho_{\epsilon_2}$ are equivalent via inner automorphisms.
\end{proof}

If $\rho$ is an involution on $FI(X,K)$ that induces $\lambda$ on $X$ and $\ast$ on $K$, then there exists a map $\epsilon: X_3 \to K_0^{\times}$ such that $\rho$ is equivalent to $\rho_\epsilon$, via inner automorphisms, by Lemma~\ref{rho_equiv_rho_epsilon}. Thus, by Corollary~\ref{rho_es_equiv2}, $\rho$ induces on $(K_0^{\times}/K_1)^{X_3}$ only one equivalence class, namely, the equivalence class that contains $\theta_{\epsilon}$. Therefore, by Remark~\ref{theta=theta_epsilon}, the number of equivalence classes of involutions on $FI(X,K)$ that induce $\lambda$ on $X$ and $\ast$ on $K$, via inner automorphisms, is equal to the number of equivalence classes on the set $(K_0^{\times}/K_1)^{X_3}$. So we have the following result.

\begin{theorem}\label{x_3naovazio}
Let $K$ be a field of characteristic different from $2$ and $X$ a connected poset such that every multiplicative automorphism of $FI(X,K)$ is inner. Let $\ast$ be an involution of the second kind on $K$ and $\lambda$ an involution on $X$ such that $\lambda(x)=x$ for some $x\in X$. Let $\rho_1$ and $\rho_2$ be involutions on $FI(X,K)$ that induce $\lambda$ on $X$ and $\ast$ on $K$. Then $\rho_1$ and $\rho_2$ are equivalent via inner automorphisms if and only if they induce the same equivalence class on $(K_0^{\times}/K_1)^{X_3}$.
\end{theorem}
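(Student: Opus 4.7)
The plan is to deduce Theorem~\ref{x_3naovazio} as a direct consolidation of Lemma~\ref{rho_equiv_rho_epsilon} and Corollary~\ref{rho_es_equiv2}, since essentially all the technical work has already been carried out. First I would verify that the assignment ``$\rho \mapsto$ equivalence class on $(K_0^\times/K_1)^{X_3}$'' is well-defined on involutions inducing $\lambda$ on $X$ and $\ast$ on $K$. By Lemma~\ref{rho_equiv_rho_epsilon} there is some $\epsilon : X_3 \to K_0^\times$ with $\rho$ equivalent to $\rho_{\epsilon}$ via inner automorphisms; if $\epsilon'$ is another such choice, then using that the composition (and inverse) of inner automorphisms is again inner, $\rho_{\epsilon}$ and $\rho_{\epsilon'}$ are equivalent via inner automorphisms, so by Corollary~\ref{rho_es_equiv2} the maps $\theta_\epsilon$ and $\theta_{\epsilon'}$ lie in the same equivalence class in $(K_0^\times/K_1)^{X_3}$.

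Next, for the main statement, take $\epsilon_1,\epsilon_2 : X_3 \to K_0^\times$ such that $\rho_i$ is equivalent to $\rho_{\epsilon_i}$ via an inner automorphism (Lemma~\ref{rho_equiv_rho_epsilon}). By construction $\rho_i$ induces the equivalence class $[\theta_{\epsilon_i}]$ on $(K_0^\times/K_1)^{X_3}$. If $\rho_1$ and $\rho_2$ are equivalent via inner automorphisms, then by transitivity $\rho_{\epsilon_1}$ and $\rho_{\epsilon_2}$ are equivalent via inner automorphisms, so Corollary~\ref{rho_es_equiv2} gives $[\theta_{\epsilon_1}]=[\theta_{\epsilon_2}]$; that is, $\rho_1$ and $\rho_2$ induce the same equivalence class. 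Conversely, if $[\theta_{\epsilon_1}]=[\theta_{\epsilon_2}]$, Corollary~\ref{rho_es_equiv2} yields $\rho_{\epsilon_1}$ and $\rho_{\epsilon_2}$ equivalent via inner automorphisms, and composing with the inner equivalences $\rho_i \sim \rho_{\epsilon_i}$ produces an inner equivalence between $\rho_1$ and $\rho_2$.

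There is no real obstacle: all the heavy lifting (existence of the $\rho_\epsilon$ form via the decomposition in Proposition~\ref{dec_vsimetrico} and Lemma~\ref{lemaiffsimetrico}, plus the translation of inner equivalence into the multiplicative coset condition via Lemma~\ref{rho_es_equiv1}) was done earlier. The only subtlety to be careful about is invoking transitivity in the inner-automorphism class rather than in the full automorphism group, which is legitimate because the inner automorphisms form a subgroup of $\Aut(FI(X,K))$; this is what makes the induced equivalence class on $(K_0^\times/K_1)^{X_3}$ a genuine invariant of the inner-equivalence class of $\rho$.
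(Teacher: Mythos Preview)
Your proposal is correct and follows essentially the same approach as the paper: the paper's proof is the short discussion paragraph immediately preceding the theorem, which likewise combines Lemma~\ref{rho_equiv_rho_epsilon} (to pass from $\rho$ to some $\rho_\epsilon$) with Corollary~\ref{rho_es_equiv2} (to translate inner equivalence into equality of classes in $(K_0^\times/K_1)^{X_3}$), together with the observation that the induced class is well-defined. Your explicit treatment of well-definedness and of both implications via transitivity of inner equivalence is exactly what the paper's paragraph is summarizing.
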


Since the cardinality of any equivalence class on $(K_0^{\times}/K_1)^{X_3}$ is equal to $|K_0^{\times}/K_1|$, there are $|K_0^{\times}/K_1|^{|X_3|-1}$ classes of involutions on $FI(X,K)$ that induce $\lambda$ on $X$ and $\ast$ on $K$, via inner automorphisms. Thus, in the particular case when $|X_3|=1$, we have the following result.

\begin{theorem}\label{x_3unitario}
Let $K$ be a field of characteristic different from $2$ and $X$ a connected poset such that every multiplicative automorphism of $FI(X,K)$ is inner. Let $\ast$ be an involution of the second kind on $K$ and $\lambda$ an involution on $X$ such that $\lambda(x)=x$ for exatly one $x\in X$. Then every involution on $FI(X,K)$ that induces $\lambda$ on $X$ and $\ast$ on $K$ is equivalent to $\rho_{\lambda}^{\ast}$ (via inner automorphisms).
\end{theorem}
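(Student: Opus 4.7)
The plan is to deduce this theorem directly from the machinery already built up in Section~\ref{Classification_inner}, in particular Lemma~\ref{rho_equiv_rho_epsilon} and Corollary~\ref{rho_es_equiv2}. Given an involution $\rho$ on $FI(X,K)$ that induces $\lambda$ on $X$ and $\ast$ on $K$, Lemma~\ref{rho_equiv_rho_epsilon} produces a map $\epsilon:X_3\to K_0^\times$ such that $\rho$ is equivalent to $\rho_\epsilon$ via inner automorphisms. In particular, the constant map $\epsilon_0\equiv 1$ on $X_3$ gives $\rho_{\epsilon_0}=\rho_\lambda^\ast$, so it suffices to show $\rho_\epsilon$ is equivalent to $\rho_{\epsilon_0}$.

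Write $X_3=\{x_0\}$. By Corollary~\ref{rho_es_equiv2}, the problem reduces to showing that the associated maps $\theta_\epsilon,\theta_{\epsilon_0}\in (K_0^\times/K_1)^{X_3}$ are equivalent, i.e.\ that there exists $g\in K_0^\times/K_1$ with $\theta_\epsilon=L_g\circ\theta_{\epsilon_0}$. Since $X_3$ is a singleton, $(K_0^\times/K_1)^{X_3}$ is canonically identified with $K_0^\times/K_1$ itself, and taking $g=\epsilon(x_0)K_1$ one checks immediately that $(L_g\circ\theta_{\epsilon_0})(x_0)=gK_1=\epsilon(x_0)K_1=\theta_\epsilon(x_0)$. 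Hence $\theta_\epsilon$ and $\theta_{\epsilon_0}$ lie in the same class, Corollary~\ref{rho_es_equiv2} yields that $\rho_\epsilon$ is equivalent to $\rho_\lambda^\ast$ via inner automorphisms, and composing equivalences shows $\rho$ itself is equivalent to $\rho_\lambda^\ast$.

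There is essentially no obstacle here, because the heavy lifting is all contained in the earlier lemmas: this theorem is simply the specialization of the count $|K_0^\times/K_1|^{|X_3|-1}$ of equivalence classes (noted in the paragraph preceding the statement) to the case $|X_3|=1$, where the exponent is zero and hence only one class exists. The only point to verify carefully is that $\rho_\lambda^\ast$ does belong to this unique class, which is immediate since $\rho_\lambda^\ast=\rho_{\epsilon_0}$ for the constant map $\epsilon_0\equiv 1$.
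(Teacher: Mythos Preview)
Your proof is correct and follows exactly the approach the paper takes: the paper gives no separate proof of this theorem, instead deriving it as an immediate consequence of the count $|K_0^\times/K_1|^{|X_3|-1}$ of equivalence classes established in the paragraph just before, which collapses to a single class when $|X_3|=1$. You have simply spelled out this reasoning in detail, including the check that $\rho_\lambda^\ast=\rho_{\epsilon_0}$ represents that unique class.
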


\subsection{General classification}\label{Genereal_classification}

Finally, we will use Theorem~\ref{teogeralclassiinvolu} to obtain the classification of involutions of the second kind on $FI(X,K)$ from the classification via inner automorphisms.

\begin{proposition}\label{ala-1}
Let $\rho$ be an involution of the second kind on $FI(X,K)$ that induces the involutions $\lambda$ on $X$ and $\ast$ on $K$, an let $\alpha$ be an automorphism of $X$. Then the involution $\widehat{\alpha}\circ\rho\circ \widehat{\alpha}^{-1}$ induces the involution $\alpha\circ\lambda\circ\alpha^{-1}$ on $X$ and the involution $\ast$ on $K$.
\end{proposition}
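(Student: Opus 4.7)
The plan is to evaluate the involution $\widehat{\alpha}\circ\rho\circ \widehat{\alpha}^{-1}$ on the diagonal primitive idempotents $e_x$ and on scalars $a\in K$, and read off the induced actions on $X$ and on $K$ from these computations. The two facts I will need first are: (a) $\widehat{\alpha}(e_y)=e_{\alpha(y)}$ for every $y\in X$, and equivalently $\widehat{\alpha}^{-1}(e_x)=e_{\alpha^{-1}(x)}$; and (b) $\widehat{\alpha}$ fixes $K$ pointwise, in the sense that $\widehat{\alpha}(a\delta)=a\delta$ for every $a\in K$. Both are immediate from the definition $\widehat{\alpha}(f)(x,y)=f(\alpha^{-1}(x),\alpha^{-1}(y))$ together with the fact that $\alpha^{-1}$ is a bijection of $X$.

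For the involution induced on $X$: fix $x\in X$ and, using that $\rho$ induces $\lambda$, choose $f_x\in U(FI(X,K))$ with $\rho(e_{\alpha^{-1}(x)})=f_x\, e_{\lambda(\alpha^{-1}(x))}\, f_x^{-1}$. Applying (a) and the fact that $\widehat{\alpha}$ is an algebra automorphism, I get
$$(\widehat{\alpha}\circ\rho\circ\widehat{\alpha}^{-1})(e_x)=\widehat{\alpha}\bigl(f_x\, e_{\lambda(\alpha^{-1}(x))}\, f_x^{-1}\bigr)=\widehat{\alpha}(f_x)\, e_{(\alpha\circ\lambda\circ\alpha^{-1})(x)}\, \widehat{\alpha}(f_x)^{-1}.$$
Thus $(\widehat{\alpha}\circ\rho\circ\widehat{\alpha}^{-1})(e_x)$ is conjugate to $e_{(\alpha\circ\lambda\circ\alpha^{-1})(x)}$, and by the uniqueness of the image point in the construction of the induced involution on $X$ (invoked in the proof of Theorem~\ref{has_involution_iff} via \cite[Lemma~1]{KN}), the involution on $X$ induced by $\widehat{\alpha}\circ\rho\circ \widehat{\alpha}^{-1}$ is exactly $\alpha\circ\lambda\circ\alpha^{-1}$.

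For the involution induced on $K$: by (b), for every $a\in K$ we have $\widehat{\alpha}^{-1}(a)=a$, so
$$(\widehat{\alpha}\circ\rho\circ\widehat{\alpha}^{-1})(a)=\widehat{\alpha}(\rho(a))=\widehat{\alpha}(a^{\ast})=a^{\ast},$$
which shows that the restriction of $\widehat{\alpha}\circ\rho\circ \widehat{\alpha}^{-1}$ to $K$ coincides with $\ast$. Putting the two computations together yields the proposition. There is no real obstacle here; the only point that requires a moment of care is ensuring that the uniqueness used to identify the induced involution on $X$ at each point $x$ is justified, and this is handled by the very definition of ``induced involution'' set up in Theorem~\ref{has_involution_iff}.
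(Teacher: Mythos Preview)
Your proof is correct and follows essentially the same approach as the paper: both compute $(\widehat{\alpha}\circ\rho\circ\widehat{\alpha}^{-1})(e_x)$ by conjugating $e_{(\alpha\circ\lambda\circ\alpha^{-1})(x)}$ via $\widehat{\alpha}$ applied to the conjugating element for $\rho(e_{\alpha^{-1}(x)})$, and both handle the restriction to $K$ via the $K$-linearity of $\widehat{\alpha}$. Your version is slightly more explicit in isolating the auxiliary facts $\widehat{\alpha}(e_y)=e_{\alpha(y)}$ and $\widehat{\alpha}(a\delta)=a\delta$ and in invoking the uniqueness from \cite[Lemma~1]{KN}, but the argument is the same.
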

\begin{proof}
For each $x\in X$, let $f_x\in U(FI(X,K))$ such that $\rho(e_x)=f_xe_{\lambda(x)}f_x^{-1}$. Then
\begin{align*}
  (\widehat{\alpha}\circ\rho\circ \widehat{\alpha}^{-1})(e_x) & = (\widehat{\alpha}\circ\rho)(e_{\alpha^{-1}(x)})\\
  & = \widehat{\alpha}(f_{\alpha^{-1}(x)}e_{\lambda(\alpha^{-1}(x))}f^{-1}_{\alpha^{-1}(x)})\\
  & = \widehat{\alpha}(f_{\alpha^{-1}(x)})e_{(\alpha\circ\lambda\circ\alpha^{-1})(x)}[\widehat{\alpha}(f_{\alpha^{-1}(x)})]^{-1}
\end{align*}
whence $\widehat{\alpha}\circ\rho\circ \widehat{\alpha}^{-1}$ induces $\alpha\circ\lambda\circ\alpha^{-1}$ on $X$. Now, since $FI(X,K)$ is central and $\widehat{\alpha}$ is $K$-linear, then, for all $a\in K$,
$$(\widehat{\alpha}\circ\rho\circ \widehat{\alpha}^{-1})(a)=(\widehat{\alpha}\circ\rho)(a)=\widehat{\alpha}(a^\ast)=a^\ast.$$
\end{proof}

As in \cite[p.1953]{BFS11}, we consider the following equivalence relation $\sim$ on the set of all involutions on $X$: $\lambda$ and $\mu$ are equivalent if there exists an automorphism $\alpha$ of $X$ such that $\alpha\circ \lambda = \mu \circ \alpha$.

\begin{corollary}~\label{cor_general}
Let $K$ be a field of characteristic different from $2$ and $X$ a connected poset such that every multiplicative automorphism of $FI(X,K)$ is inner.
\begin{enumerate}
  \item[(i)] If the involutions $\rho_1$ and $\rho_2$ of the second kind on $FI(X,K)$ are equivalent, then they induce equivalent involutions on $X$ and $\rho_1|_K=\rho_2|_K$.
  \item[(ii)] Let $\lambda_1$ and $\lambda_2$ be equivalent involutions on $X$, $\alpha$ an automorphism of $X$ such that $\alpha\circ \lambda_2 = \lambda_1 \circ \alpha$, and $\ast$ an involution of the second kind on $K$. If $\rho$ is an involution on $FI(X,K)$ that induces $\lambda_2$ on $X$ and $\ast$ on $K$, then $\rho$ is equivalent to the involution $\widehat{\alpha}\circ\rho\circ \widehat{\alpha}^{-1}$ which induces $\lambda_1$ on $X$ and $\ast$ on $K$.
\end{enumerate}
\end{corollary}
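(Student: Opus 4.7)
The plan is to treat (i) and (ii) separately, each by reducing to tools already in hand: Theorem~\ref{teogeralclassiinvolu}, Theorem~\ref{necessary_inner}, and Proposition~\ref{ala-1}. For part (i), I would apply Theorem~\ref{teogeralclassiinvolu} to obtain an automorphism $\alpha$ of $X$ and an inner automorphism $\Psi$ of $FI(X,K)$ with
\[
\rho_1=\Psi\circ\widehat{\alpha}\circ\rho_2\circ\widehat{\alpha}^{-1}\circ\Psi^{-1}.
\]
Set $\rho_2'=\widehat{\alpha}\circ\rho_2\circ\widehat{\alpha}^{-1}$, and let $\lambda_i$ denote the involution on $X$ induced by $\rho_i$ ($i=1,2$). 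By Proposition~\ref{ala-1}, $\rho_2'$ is an involution of the second kind that induces $\alpha\circ\lambda_2\circ\alpha^{-1}$ on $X$ and $\rho_2|_K$ on $K$. The displayed equality says $\rho_1=\Psi\circ\rho_2'\circ\Psi^{-1}$, so $\rho_1$ and $\rho_2'$ are equivalent via an inner automorphism. Theorem~\ref{necessary_inner} then forces $\rho_1$ and $\rho_2'$ to induce the same involution on $X$ and to agree on $K$. Therefore $\lambda_1=\alpha\circ\lambda_2\circ\alpha^{-1}$, so $\lambda_1\sim\lambda_2$ in the sense of the relation defined just before the corollary, and $\rho_1|_K=\rho_2'|_K=\rho_2|_K$.

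For part (ii), I would simply take $\Phi=\widehat{\alpha}$, which is an automorphism of $FI(X,K)$ since $\alpha$ is an automorphism of $X$. Put $\rho'=\widehat{\alpha}\circ\rho\circ\widehat{\alpha}^{-1}$; then $\Phi\circ\rho=\rho'\circ\Phi$ by construction, so $\rho$ and $\rho'$ are equivalent. Proposition~\ref{ala-1} identifies the involution on $X$ induced by $\rho'$ as $\alpha\circ\lambda_2\circ\alpha^{-1}$, which equals $\lambda_1$ thanks to the hypothesis $\alpha\circ\lambda_2=\lambda_1\circ\alpha$, and also gives that the involution on $K$ induced by $\rho'$ remains $\ast$.

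The main obstacle is conceptual rather than technical: one must combine Theorem~\ref{teogeralclassiinvolu}, which provides the two-factor form $\Psi\circ\widehat{\alpha}$ of the equivalence automorphism, with Theorem~\ref{necessary_inner}, which only sees the inner part. The key move is to absorb $\widehat{\alpha}$ by conjugation into a new involution $\rho_2'$ equivalent to $\rho_2$ but whose induced involution on $X$ has been shifted by $\alpha$ (as computed by Proposition~\ref{ala-1}), so that only the inner part of the equivalence has to be read through Theorem~\ref{necessary_inner}. Part (ii) is essentially the reverse: any $\widehat{\alpha}$-conjugation of $\rho$ is automatically equivalent to $\rho$, and Proposition~\ref{ala-1} pins down its induced data.
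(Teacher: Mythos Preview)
Your proof is correct and follows essentially the same approach as the paper: both invoke Theorem~\ref{teogeralclassiinvolu} to write the equivalence as $\Psi\circ\widehat{\alpha}$, conjugate $\rho_2$ by $\widehat{\alpha}$, apply Proposition~\ref{ala-1} to compute the induced data, and then use Theorem~\ref{necessary_inner} on the remaining inner part; part~(ii) is handled identically via Proposition~\ref{ala-1}. Your write-up is slightly more explicit, but the logical structure is the same.
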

\begin{proof}
(i) Let $\lambda_1$ and $\lambda_2$ be the involutions on $X$ induced by the involutions $\rho_1$ and $\rho_2$, respectively. If $\rho_1$ and $\rho_2$ are equivalent, then there exists an automorphism $\alpha$ of $X$ such that $\rho_1$ and $\widehat{\alpha}\circ\rho_2\circ \widehat{\alpha}^{-1}$ are equivalent via inner automorphism, by Theorem~\ref{teogeralclassiinvolu}. Thus, by Theorem~\ref{necessary_inner} and Proposition~\ref{ala-1}, $\lambda_1=\alpha\circ\lambda_2\circ\alpha^{-1}$ and $\rho_1|_K=\rho_2|_K$.

(ii) It is clear that $\widehat{\alpha}\circ\rho\circ \widehat{\alpha}^{-1}$ is an involution on $FI(X,K)$ which is equivalent to $\rho$. Moreover, $\widehat{\alpha}\circ\rho\circ \widehat{\alpha}^{-1}$ induces $\lambda_1$ on $X$ and $\ast$ on $K$, by Proposition~\ref{ala-1}.
\end{proof}

\begin{remark}\label{al=la}
Let $\lambda_1,\lambda_2$ be equivalent involutions on $X$, and $X_3^{\lambda_i}=\{x\in X : \lambda_i(x)=x\}$, $i=1,2$. Let $\alpha$ be an automorphism of $X$ such that $\alpha\circ\lambda_2=\lambda_1\circ\alpha$. We have
$$\lambda_2(x)=x\Leftrightarrow \alpha(\lambda_2(x))=\alpha(x)\Leftrightarrow \lambda_1(\alpha(x))=\alpha(x).$$
Thus, $|X_3^{\lambda_1}|=|X_3^{\lambda_2}|$, and given $\epsilon:X_3^{\lambda_2}\to K_0^{\times}$ we have a well-defined map $\epsilon\circ\alpha^{-1}:X_3^{\lambda_1}\to K_0^{\times}$.
\end{remark}

\begin{theorem}\label{x_3lessthanorequalto1}
Let $K$ be a field of characteristic different from $2$ and $X$ a connected poset such that every multiplicative automorphism of $FI(X,K)$ is inner. Let $\lambda_1,\lambda_2$ be involutions on $X$ such that $|X_3^{\lambda_1}|=|X_3^{\lambda_2}|\leq 1$. Let $\rho_1$ and $\rho_{2}$ be involutions of the second kind on $FI(X,K)$ that induce $\lambda_1$ and $\lambda_2$ on $X$, respectively. Then $\rho_{1}$ and $\rho_{2}$ are equivalent if and only if $\lambda_1$ and $\lambda_2$ are equivalent and $\rho_1|_K=\rho_2|_K$.
\end{theorem}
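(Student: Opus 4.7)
The forward direction is immediate: if $\rho_1$ and $\rho_2$ are equivalent, then by Corollary~\ref{cor_general}(i) the induced involutions $\lambda_1,\lambda_2$ on $X$ are equivalent and the restrictions to $K$ coincide. So the whole content is the sufficiency direction.

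For sufficiency, set $\ast=\rho_1|_K=\rho_2|_K$ and let $\alpha$ be an automorphism of $X$ with $\alpha\circ\lambda_2=\lambda_1\circ\alpha$ (existing because $\lambda_1\sim\lambda_2$). The plan is to use $\alpha$ to move $\rho_2$ onto an involution with the \emph{same} data as $\rho_1$, then collapse the resulting inner-equivalence class using the single-class results of Section~\ref{Classification_inner}. Concretely, set $\rho_2':=\widehat{\alpha}\circ\rho_2\circ\widehat{\alpha}^{-1}$. Then $\rho_2\sim\rho_2'$ by construction (via $\widehat{\alpha}$), and by Proposition~\ref{ala-1} the involution $\rho_2'$ induces $\alpha\circ\lambda_2\circ\alpha^{-1}=\lambda_1$ on $X$ and $\ast$ on $K$. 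Thus $\rho_1$ and $\rho_2'$ both induce $\lambda_1$ on $X$ and $\ast$ on $K$.

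Now split on $|X_3^{\lambda_1}|$. If $|X_3^{\lambda_1}|=0$, Theorem~\ref{x_3vazio} says every involution inducing $\lambda_1$ and $\ast$ is equivalent via inner automorphisms to $\rho_{\lambda_1}^{\ast}$; applying this to both $\rho_1$ and $\rho_2'$ yields $\rho_1\sim\rho_2'$ via inner automorphisms. If $|X_3^{\lambda_1}|=1$, Theorem~\ref{x_3unitario} gives the same conclusion. In either case, composing with the equivalence $\rho_2'\sim\rho_2$ coming from $\widehat{\alpha}$ gives $\rho_1\sim\rho_2$, as required.

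There is no real obstacle to the argument: the work has already been done in establishing Proposition~\ref{ala-1} (which guarantees that $\widehat{\alpha}$-conjugation behaves correctly on the induced data) and in Theorems~\ref{x_3vazio} and~\ref{x_3unitario} (which supply the single equivalence class under inner automorphisms when $|X_3|\le 1$). The only point to watch is the bookkeeping of Remark~\ref{al=la}, which ensures $|X_3^{\lambda_1}|=|X_3^{\lambda_2}|$ is automatic from $\lambda_1\sim\lambda_2$, so the hypothesis is consistent and the case split is well-posed.
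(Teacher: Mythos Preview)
Your proof is correct and follows essentially the same strategy as the paper's. The only difference is the order of operations: the paper first reduces each $\rho_i$ to $\rho_{\lambda_i}^{\ast}$ via Theorems~\ref{x_3vazio} and~\ref{x_3unitario} and then verifies by direct computation that $\widehat{\alpha}\circ\rho_{\lambda_2}^{\ast}\circ\widehat{\alpha}^{-1}=\rho_{\lambda_1}^{\ast}$, whereas you conjugate $\rho_2$ first and then apply the single-class theorems to $\rho_1$ and $\rho_2'$ simultaneously; your ordering has the minor advantage of bypassing that explicit computation.
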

\begin{proof}
If $\rho_{1}$ and $\rho_{2}$ are equivalent, then $\lambda_1$ and $\lambda_2$ are equivalent and $\rho_1|_K=\rho_2|_K$, by Corollary~\ref{cor_general} (i).

Conversely, suppose $\lambda_1$ and $\lambda_2$ are equivalent and $\rho_1|_K=\rho_2|_K=:\ast$. By Theorems~\ref{x_3vazio} and \ref{x_3unitario}, $\rho_i$ is equivalent (via inner automorphism) to $\rho_{\lambda_i}^\ast$, $i=1,2$. Let $\alpha$ be an automorphism of $X$ such that $\alpha\circ\lambda_2=\lambda_1\circ\alpha$. By Corollary~\ref{cor_general} (ii), $\rho_{\lambda_2}^\ast$ is equivalent to the involution $\widehat{\alpha}\circ\rho_{\lambda_2}^\ast\circ \widehat{\alpha}^{-1}$. However, for any $f\in FI(X,K)$ and $x\leq y$ in $X$, we have
\begin{align*}
  (\widehat{\alpha}\circ\rho_{\lambda_2}^\ast\circ \widehat{\alpha}^{-1})(f)(x,y) & = (\rho_{\lambda_2}^\ast\circ \widehat{\alpha}^{-1})(f)(\widehat{\alpha}^{-1}(x),\widehat{\alpha}^{-1}(y))\\
  & = [\widehat{\alpha}^{-1}(f)(\lambda_2(\alpha^{-1}(y)),\lambda_2(\alpha^{-1}(x))]^\ast \\
  & = [f((\alpha\circ\lambda_2\circ\alpha^{-1})(y),(\alpha\circ\lambda_2\circ\alpha^{-1})(x))]^\ast\\
  & = [f(\lambda_1(y),\lambda_1(x))]^\ast = \rho_{\lambda_1}^\ast(f)(x,y),
\end{align*}
that is, $\widehat{\alpha}\circ\rho_{\lambda_2}^\ast\circ \widehat{\alpha}^{-1}=\rho_{\lambda_1}^\ast$. Therefore, $\rho_{\lambda_1}^\ast$ and $\rho_{\lambda_2}^\ast$ are equivalent and hence $\rho_{1}$ and $\rho_{2}$ are equivalent.
\end{proof}

The next theorem completes the classification of involutions of the second kind on $FI(X,K)$ (see Lemma~\ref{rho_equiv_rho_epsilon}, Corollary~\ref{rho_es_equiv2}, Theorem~\ref{x_3naovazio}, Corollary~\ref{cor_general} and Theorem~\ref{x_3lessthanorequalto1}). To prove it, we need a lemma.

\begin{lemma}\label{conjugado_rho_epsilon}
Let $K$ be a field of characteristic different from $2$ and $X$ a connected poset such that every multiplicative automorphism of $FI(X,K)$ is inner. Let $\ast$ be an involution of the second kind on $K$, $\lambda_1,\lambda_2$ equivalent involutions on $X$, and $X_3^{\lambda_i}=\{x\in X : \lambda_i(x)=x\}$, $i=1,2$. Let $\alpha$ be an automorphism of $X$ such that $\alpha\circ\lambda_2=\lambda_1\circ\alpha$. If $\rho_{\epsilon_2}=\Psi_{u_{\epsilon_2}} \circ \rho_{\lambda_2}^{\ast}$, then $\widehat{\alpha}\circ\rho_{\epsilon_2}\circ \widehat{\alpha}^{-1}=\rho_{\epsilon_2\circ \alpha^{-1}}$.
\end{lemma}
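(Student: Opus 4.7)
My plan is to unpack the definition $\rho_{\epsilon_2}=\Psi_{u_{\epsilon_2}}\circ\rho_{\lambda_2}^{\ast}$ and conjugate by $\widehat{\alpha}$ one factor at a time:
\[
\widehat{\alpha}\circ\rho_{\epsilon_2}\circ\widehat{\alpha}^{-1}
=\bigl(\widehat{\alpha}\circ\Psi_{u_{\epsilon_2}}\circ\widehat{\alpha}^{-1}\bigr)\circ\bigl(\widehat{\alpha}\circ\rho_{\lambda_2}^{\ast}\circ\widehat{\alpha}^{-1}\bigr).
\]

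For the first factor I would use the elementary identity $\widehat{\alpha}\circ\Psi_u\circ\widehat{\alpha}^{-1}=\Psi_{\widehat{\alpha}(u)}$, valid for any automorphism $\widehat{\alpha}$ of $FI(X,K)$ and any $u\in U(FI(X,K))$, giving $\Psi_{\widehat{\alpha}(u_{\epsilon_2})}$. For the second factor, the calculation carried out in the proof of Theorem~\ref{x_3lessthanorequalto1} (using the hypothesis $\alpha\circ\lambda_2=\lambda_1\circ\alpha$) already shows $\widehat{\alpha}\circ\rho_{\lambda_2}^{\ast}\circ\widehat{\alpha}^{-1}=\rho_{\lambda_1}^{\ast}$, so I can simply invoke it.

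It then remains to identify $\widehat{\alpha}(u_{\epsilon_2})$ with $u_{\epsilon_2\circ\alpha^{-1}}$. By definition $\widehat{\alpha}(u_{\epsilon_2})(x,y)=u_{\epsilon_2}(\alpha^{-1}(x),\alpha^{-1}(y))$, and I would split into cases on $(x,y)$ according to the definition \eqref{u_epsilon} of $u_{\epsilon_2\circ\alpha^{-1}}$. The key observation is that Remark~\ref{al=la} gives $\alpha(X_3^{\lambda_2})=X_3^{\lambda_1}$, so $x\in X_3^{\lambda_1}$ iff $\alpha^{-1}(x)\in X_3^{\lambda_2}$, and similarly for the complement. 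The three cases ($x\neq y$; $x=y\notin X_3^{\lambda_1}$; $x=y\in X_3^{\lambda_1}$) then match entry-by-entry, the last yielding $\epsilon_2(\alpha^{-1}(x))=(\epsilon_2\circ\alpha^{-1})(x)$.

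Combining the three steps gives
\[
\widehat{\alpha}\circ\rho_{\epsilon_2}\circ\widehat{\alpha}^{-1}
=\Psi_{u_{\epsilon_2\circ\alpha^{-1}}}\circ\rho_{\lambda_1}^{\ast}
=\rho_{\epsilon_2\circ\alpha^{-1}},
\]
as desired. No real obstacle is expected; the only thing to be careful about is the indexing in the case analysis for $\widehat{\alpha}(u_{\epsilon_2})$, making sure the $X_3^{\lambda_1}$ vs.\ $X_3^{\lambda_2}$ distinction is handled correctly via $\alpha$.
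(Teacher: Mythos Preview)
Your proposal is correct and follows essentially the same approach as the paper: both identify $\widehat{\alpha}(u_{\epsilon_2})=u_{\epsilon_2\circ\alpha^{-1}}$ via the case analysis enabled by Remark~\ref{al=la}, and both reduce the $\rho_{\lambda_2}^{\ast}$ part to $\rho_{\lambda_1}^{\ast}$ using $\alpha\circ\lambda_2=\lambda_1\circ\alpha$. The only difference is organizational---you factor the conjugation as $\bigl(\widehat{\alpha}\circ\Psi_{u_{\epsilon_2}}\circ\widehat{\alpha}^{-1}\bigr)\circ\bigl(\widehat{\alpha}\circ\rho_{\lambda_2}^{\ast}\circ\widehat{\alpha}^{-1}\bigr)$ and invoke the computation already done in Theorem~\ref{x_3lessthanorequalto1}, whereas the paper carries out a single pointwise computation of $(\widehat{\alpha}\circ\rho_{\epsilon_2}\circ\widehat{\alpha}^{-1})(f)(x,y)$ from scratch.
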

\begin{proof}
By \eqref{u_epsilon} and Remark~\ref{al=la}, we have
\begin{align*}
u_{\epsilon_2}(\alpha^{-1}(x),\alpha^{-1}(x)) & = \begin{cases}
1 & \text{if } \alpha^{-1}(x) \in X\backslash X_3^{\lambda_2}\\
\epsilon_2(\alpha^{-1}(x)) & \text{if } \alpha^{-1}(x) \in X_3^{\lambda_2}
\end{cases}\\
 & = \begin{cases}
1 & \text{if } x \in X\backslash X_3^{\lambda_1}\\
(\epsilon_2\circ\alpha^{-1})(x) & \text{if } x \in X_3^{\lambda_1}
\end{cases}\\
 & = u_{\epsilon_2\circ \alpha^{-1}}(x,x).
\end{align*}
Therefore, for any $f\in FI(X,K)$ and $x\leq y$ in $X$, we have
\begin{align*}
  (\widehat{\alpha}\circ\rho_{\epsilon_2}\circ \widehat{\alpha}^{-1})(f)(x,y) & = (\rho_{\epsilon_2}\circ \widehat{\alpha}^{-1})(f)(\alpha^{-1}(x),\alpha^{-1}(y))\\
  & = (\Psi_{u_{\epsilon_2}} \circ \rho_{\lambda_2}^{\ast}\circ\widehat{\alpha}^{-1})(f)(\alpha^{-1}(x),\alpha^{-1}(y))\\
  & =(u_{\epsilon_2}(\rho_{\lambda_2}^{\ast}\circ\widehat{\alpha}^{-1})(f)u_{\epsilon_2}^{-1})(\alpha^{-1}(x),\alpha^{-1}(y))\\
  & = u_{\epsilon_2\circ \alpha^{-1}}(x,x)\rho_{\lambda_2}^{\ast}(\widehat{\alpha}^{-1}(f))(\alpha^{-1}(x),\alpha^{-1}(y))
  u_{\epsilon_2\circ \alpha^{-1}}^{-1}(y,y)\\
  & = u_{\epsilon_2\circ \alpha^{-1}}(x,x)[\widehat{\alpha}^{-1}(f)(\lambda_2(\alpha^{-1}(y)),\lambda_2(\alpha^{-1}(x))]^\ast
  u_{\epsilon_2\circ \alpha^{-1}}^{-1}(y,y)\\
  & = u_{\epsilon_2\circ \alpha^{-1}}(x,x)[f(\lambda_1(y),\lambda_1(x))]^\ast u_{\epsilon_2\circ \alpha^{-1}}^{-1}(y,y)\\
  & = (\Psi_{u_{\epsilon_2\circ \alpha^{-1}}} \circ \rho_{\lambda_1}^{\ast})(f)(x,y)\\
  & = \rho_{\epsilon_2\circ \alpha^{-1}}(f)(x,y)
\end{align*}
and hence $\widehat{\alpha}\circ\rho_{\epsilon_2}\circ \widehat{\alpha}^{-1}=\rho_{\epsilon_2\circ \alpha^{-1}}$.
\end{proof}

\begin{theorem}\label{x_3morethan1}
Let $K$ be a field of characteristic different from $2$ and $X$ a connected poset such that every multiplicative automorphism of $FI(X,K)$ is inner. Let $\lambda_1,\lambda_2$ be involutions on $X$ such that $|X_3^{\lambda_1}|=|X_3^{\lambda_2}|> 1$. Let $\ast_i$ be an involution of the second on $K$, and $\epsilon_i:X_3^{\lambda_i} \to K_0^{\times}$ a map, $i=1,2$, such that the involutions $\rho_{\epsilon_1}=\Psi_{u_{\epsilon_1}} \circ \rho_{\lambda_1}^{\ast_1}$ and $\rho_{\epsilon_2}=\Psi_{u_{\epsilon_2}} \circ \rho_{\lambda_2}^{\ast_2}$ are not equivalent via inner automorphisms. Then $\rho_{\epsilon_1}$ and $\rho_{\epsilon_2}$ are equivalent if and only if $\ast_1=\ast_2$ and there exists an automorphism $\alpha$ of $X$ such that $\alpha\circ\lambda_2=\lambda_1\circ\alpha$ (i.e., $\lambda_1$ and $\lambda_2$ are equivalent) and $\rho_{\epsilon_1}$ and $\rho_{\epsilon_2\circ\alpha^{-1}}$ are equivalent, via inner automorphisms.
\end{theorem}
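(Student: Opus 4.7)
The plan is to prove the two directions separately, using Theorem~\ref{teogeralclassiinvolu} together with Lemma~\ref{conjugado_rho_epsilon} as the main tools. For the forward direction, I would begin by applying Theorem~\ref{teogeralclassiinvolu}: equivalence of $\rho_{\epsilon_1}$ and $\rho_{\epsilon_2}$ means there exist an automorphism $\alpha$ of $X$ and an inner automorphism $\Psi$ of $FI(X,K)$ such that $\rho_{\epsilon_1}=\Psi\circ\widehat{\alpha}\circ\rho_{\epsilon_2}\circ\widehat{\alpha}^{-1}\circ\Psi^{-1}$. Equivalently, $\rho_{\epsilon_1}$ is equivalent via an inner automorphism to $\widehat{\alpha}\circ\rho_{\epsilon_2}\circ\widehat{\alpha}^{-1}$. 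By Proposition~\ref{ala-1}, this conjugated involution induces $\alpha\circ\lambda_2\circ\alpha^{-1}$ on $X$ and $\ast_2$ on $K$. Since $\rho_{\epsilon_1}$ induces $\lambda_1$ on $X$ and $\ast_1$ on $K$, Theorem~\ref{necessary_inner} forces $\lambda_1=\alpha\circ\lambda_2\circ\alpha^{-1}$ (that is, $\alpha\circ\lambda_2=\lambda_1\circ\alpha$) and $\ast_1=\ast_2$. Lemma~\ref{conjugado_rho_epsilon} then identifies $\widehat{\alpha}\circ\rho_{\epsilon_2}\circ\widehat{\alpha}^{-1}$ with $\rho_{\epsilon_2\circ\alpha^{-1}}$, yielding the required inner equivalence of $\rho_{\epsilon_1}$ and $\rho_{\epsilon_2\circ\alpha^{-1}}$.

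For the converse, assume $\ast_1=\ast_2$, let $\alpha$ be an automorphism of $X$ with $\alpha\circ\lambda_2=\lambda_1\circ\alpha$, and suppose $\rho_{\epsilon_1}$ and $\rho_{\epsilon_2\circ\alpha^{-1}}$ are equivalent via an inner automorphism $\Psi$, so that $\Psi\circ\rho_{\epsilon_2\circ\alpha^{-1}}=\rho_{\epsilon_1}\circ\Psi$. Applying Lemma~\ref{conjugado_rho_epsilon} to replace $\rho_{\epsilon_2\circ\alpha^{-1}}$ by $\widehat{\alpha}\circ\rho_{\epsilon_2}\circ\widehat{\alpha}^{-1}$ and then post-composing with $\widehat{\alpha}$ gives $(\Psi\circ\widehat{\alpha})\circ\rho_{\epsilon_2}=\rho_{\epsilon_1}\circ(\Psi\circ\widehat{\alpha})$, establishing that $\rho_{\epsilon_1}$ and $\rho_{\epsilon_2}$ are equivalent via the automorphism $\Psi\circ\widehat{\alpha}$.

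All the machinery needed---the canonical form of automorphisms (Theorem~\ref{DecoAutFIP}, packaged in Theorem~\ref{teogeralclassiinvolu}), the effect of $\widehat{\alpha}$-conjugation on the induced data (Proposition~\ref{ala-1} combined with Theorem~\ref{necessary_inner}), and the explicit identification $\widehat{\alpha}\circ\rho_{\epsilon_2}\circ\widehat{\alpha}^{-1}=\rho_{\epsilon_2\circ\alpha^{-1}}$ (Lemma~\ref{conjugado_rho_epsilon})---is already in place, so I do not anticipate a serious obstacle; the proof amounts to a careful chaining of these prior results. The hypotheses that $|X_3^{\lambda_i}|>1$ and that $\rho_{\epsilon_1}, \rho_{\epsilon_2}$ are not inner-equivalent are not used in the argument itself; they serve only to delimit the regime of this theorem from Theorems~\ref{x_3lessthanorequalto1} and \ref{x_3naovazio} and to place it as the genuinely new case completing the classification.
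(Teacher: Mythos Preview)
Your proposal is correct and follows essentially the same route as the paper: both directions rely on Theorem~\ref{teogeralclassiinvolu} to reduce to an inner conjugation of $\widehat{\alpha}\circ\rho_{\epsilon_2}\circ\widehat{\alpha}^{-1}$, invoke Proposition~\ref{ala-1} and Theorem~\ref{necessary_inner} (the paper packages these as ``the proof of Corollary~\ref{cor_general}~(i)'') to pin down $\ast_1=\ast_2$ and $\alpha\circ\lambda_2=\lambda_1\circ\alpha$, and then apply Lemma~\ref{conjugado_rho_epsilon} to identify the conjugate with $\rho_{\epsilon_2\circ\alpha^{-1}}$. Your remark that the extra hypotheses serve only to delimit this case from the earlier theorems is also accurate.
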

\begin{proof}
If $\rho_{\epsilon_1}$ and $\rho_{\epsilon_2}$ are equivalent, then by the proof of Corollary~\ref{cor_general} (i), there exists an automorphism $\alpha$ of $X$ such that $\rho_{\epsilon_1}$ and $\widehat{\alpha}\circ\rho_{\epsilon_2}\circ \widehat{\alpha}^{-1}$ are equivalent via inner automorphisms, $\alpha\circ\lambda_2=\lambda_1\circ\alpha$ and $\ast_1=\rho_{\epsilon_1}|_K=\rho_{\epsilon_2}|_K=\ast_2$. By Lemma~\ref{conjugado_rho_epsilon}, $\widehat{\alpha}\circ\rho_{\epsilon_2}\circ \widehat{\alpha}^{-1}=\rho_{\epsilon_2\circ \alpha^{-1}}$. Therefore, $\rho_{\epsilon_1}$ and $\rho_{\epsilon_2\circ\alpha^{-1}}$ are equivalent, via inner automorphisms.

Conversely, suppose that $\ast_1=\ast_2=:\ast$ and there exists an automorphism $\alpha$ of $X$ such that $\alpha\circ\lambda_2=\lambda_1\circ\alpha$, and $\rho_{\epsilon_1}$ and $\rho_{\epsilon_2\circ\alpha^{-1}}$ are equivalent, via inner automorphisms. By Lemma~\ref{conjugado_rho_epsilon}, $\rho_{\epsilon_2\circ \alpha^{-1}}=\widehat{\alpha}\circ\rho_{\epsilon_2}\circ \widehat{\alpha}^{-1}$. It follows that there is an inner automorphism $\Psi$ of $FI(X,K)$ such that $\Psi\circ\rho_{\epsilon_1}=\widehat{\alpha}\circ\rho_{\epsilon_2}\circ \widehat{\alpha}^{-1}\circ\Psi$, that is, $(\widehat{\alpha}^{-1}\circ\Psi)\circ\rho_{\epsilon_1}=\rho_{\epsilon_2}\circ (\widehat{\alpha}^{-1}\circ\Psi)$ and hence $\rho_{\epsilon_1}$ and $\rho_{\epsilon_2}$ are equivalent.
\end{proof}








\end{document}